\documentclass[a4paper,12pt]{article}

\usepackage[british]{babel}
\usepackage{csquotes}
\usepackage[margin=2cm]{geometry}
\usepackage{amssymb,amsmath,amsthm,thmtools,mathtools}
\usepackage[dvipsnames]{xcolor}
\usepackage{todonotes}
\usepackage[hidelinks]{hyperref}
\addto\extrasbritish{%
	\def\Snospace~{\S{}}
}
\usepackage{enumitem}
\usepackage[backend=biber,style=alphabetic,sorting=nyt,backref=true,useprefix=true,maxalphanames=4,minalphanames=3,maxnames=4,minnames=3]{biblatex}
\usepackage{tikz-cd}

\newenvironment{summary}{\begin{quote}\small}{\end{quote}}

\declaretheorem[style=plain,name=Theorem,numberwithin=section]{theorem}
\declaretheorem[style=plain,name=Proposition,sibling=theorem]{proposition}
\declaretheorem[style=plain,name=Lemma,sibling=theorem]{lemma}
\declaretheorem[style=plain,name=Problem,sibling=theorem]{problem}
\declaretheorem[style=plain,name=Corollary,sibling=theorem]{corollary}

\declaretheorem[style=definition,name=Definition,sibling=theorem]{definition}

\declaretheorem[style=definition,name=Conjecture,sibling=theorem]{conjecture}	
\declaretheorem[style=definition,name=Example,sibling=theorem]{example}

\declaretheorem[style=remark,name=Remark,sibling=theorem]{remark}

\declaretheorem[style=definition,name=Question,sibling=theorem]{question}

\newcommand{\from}{\colon}
\newcommand{\leteq}{\coloneqq}
\newcommand{\N}{\mathbb{N}}

\newcommand{\field}{\mathbb{F}}
\newcommand{\Proj}[1]{\mathbb{P}(#1)}
\newcommand{\closure}[1]{\overline{#1}}

\DeclareMathOperator{\PSL}{PSL}
\DeclareMathOperator{\GL}{GL}
\DeclareMathOperator{\PGL}{PGL}

\DeclareMathOperator{\diag}{diag}
\DeclareMathOperator{\rank}{rank}
\DeclareMathOperator{\id}{id}

\DeclareMathOperator{\PStab}{PStab}
\DeclareMathOperator{\Sym}{Sym}
\DeclareMathOperator{\Centre}{Z}

\newcommand{\multiplicativeGroup}{\mathbb{G}_{\text{m}}}

\newcommand{\isom}{\cong}

\newcommand{\cP}{\mathcal{P}}
\newcommand{\cI}{\mathcal{I}}

\newcommand{\transformTo}{\rightsquigarrow}

\newcommand{\mat}[1]{\mathsf{#1}}

\DeclareMathOperator{\coveringNumber}{\textsc{Tcn}}
\DeclareMathOperator{\packingNumber}{\textsc{Tpn}}

\title{Toric decomposition in algebraic groups}
\author{Dávid R. Szabó{\thanks{HUN-REN Alfréd Rényi Institute of Mathematics 
(Hungary, 1053 Budapest, Reáltanoda u. 13-15.). 
The author was supported by the National Research, Development and Innovation Fund -- grant numbers: HIGHLIGHT 153681  
and ADVANCED 153080.
E-mail: {\tt szabo.r.david@gmail.com}}}}
\date{\today}

\begin{document}

\maketitle

\begin{abstract}
    Over an arbitrary field $\field$, we
    construct $n+1$ maximal tori $T_1,\dots,T_{n+1}$ in $\PGL_n(\field)$ so that 
    the product $T_1\dots T_{n+1}$ is almost the whole $\PGL_n(\field)$ and 
    every $g\in T_1\dots T_{n+1}$ can be expressed uniquely as $g=t_1\dots t_{n+1}$ where $t_i\in T_i$. The construction is optimal, as the number of tori with this property attains a general upper bound for connected reductive groups over an algebraically closed field, as well as over finite fields.

    We also show that $n+2$ suitably chosen maximal tori $T_1,\dots,T_{n+2}$ are enough to cover the whole group, i.e. $\PGL_n(\field)=T_1\dots T_{n+2}$, provided $|\field|>n^2$. 
    This is optimal over a finite field and is conjecturally optimal over algebraically closed fields, i.e. the number of such tori is as small as possible.
\end{abstract}

\setcounter{tocdepth}{2}
\tableofcontents

\section{Introduction}

\subsection{Main results}

\begin{definition}
    Let $G$ be a group, $A_1,\dots,A_r$ be subsets of $G$, 
    and consider the \emph{multiplication map} $\mu\from A_1\times \dots \times A_r\mapsto G$ of sets given by $(a_1,\dots,a_r)\mapsto a_1\dots a_r$. 
    The \emph{product} $A_1\dots A_r$ is defined to be the image of $\mu$, i.e. 
    $A_1\dots A_r=\{a_1\dots a_r:a_1\in A_1,\dots,a_r\in A_r\}$. 
\end{definition}

\begin{definition}\label{def:packingCOveringTiling}
    If $G$ is a linear algebraic group, 
    we say  split maximal tori $T_1,\dots,T_r$ \emph{induce a toric packing/covering}, 
    if the multiplication map is injective/surjective.
    Define the \emph{toric packing/covering number} as 
    \begin{align*}
        \packingNumber(G)&\leteq \max\{r\in \N:\text{$T_1,\dots,T_r$ induce a toric packing in $G$}\},\\
        \coveringNumber(G)&\leteq \inf\{r\in \N:\text{$T_1,\dots,T_r$ induce a toric covering in $G$}\}.  
    \end{align*}
\end{definition}
\begin{remark}\label{rem:packingCoveringIntuition}
    If $T_1,\dots,T_r$ induce a toric packing, then 
    every $g\in T_1\dots T_r$ can be uniquely decomposed to the form $g=t_1\dots t_r$ for $t_i\in T_i$. 
    In other words, the left cosets $t_1t_2\dots t_{r-1}T_r$ for $(t_1,\dots,t_{r-1})\in T_1\times \dots \times T_{r-1}$ are pariwise disjoint, i.e. form a packing of the set $G$ using translates of the  torus $T_r$ along $r-1$ different directions.
     On the other hand, if $T_1,\dots,T_r$ induce a toric covering, then $G=T_1\dots, T_r$, explaining the terminology.
\end{remark}

The first statement of this paper gives bounds the following bounds.
\begin{theorem}\label{thm:mainBounds}
    Let $G$ be a nontrivial connected reductive algebraic group over an algebraically closed field. 
    Then 
    \[1\leq \packingNumber(G)\leq \frac{\dim(G)}{\rank(G)}\leq \coveringNumber(G)\leq 2\dim(G)-2\rank(G)+2.\]
\end{theorem}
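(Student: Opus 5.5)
The chain has four inequalities, and I will prove them in order: the two outer ones by explicit elementary arguments, the two middle ones by dimension counting on the multiplication map $\mu\colon T_1\times\dots\times T_r\to G$.

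\textbf{The outer inequalities.} For $1\leq \packingNumber(G)$, a single split maximal torus $T_1$ induces a packing, since $\mu$ is the identity on $T_1$. This uses that maximal tori exist and are split over an algebraically closed field.

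\textbf{The middle inequalities.} For a toric packing $T_1,\dots,T_r$, the map $\mu$ is an injective morphism of varieties. Its source is irreducible of dimension $r\cdot\rank(G)$. An injective morphism has finite fibres, so the dimension of the image equals the dimension of the source. The image is constructible in $G$, so $r\rank(G)\leq \dim G$, which gives $\packingNumber(G)\leq \dim G/\rank G$. Dually, if $\mu$ is surjective, then $\dim G\leq r\rank(G)$ by the fibre dimension theorem, so $\coveringNumber(G)\geq \dim G/\rank G$. Since the inequality $\packingNumber\leq\coveringNumber$ is not needed separately, these two bounds give the middle of the chain directly.

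\textbf{The upper bound on $\coveringNumber(G)$.} Here I need a construction. Write $N=\dim G-\rank G=2|\Phi^+|$ for the number of roots, and aim for $2N+2$ tori.
\begin{enumerate}
\item Use the Bruhat decomposition: $G=\bigcup_w U^- T\dot w U$. Alternatively, use that the big cell $\Omega=U^-TU$ is dense open, so $G=\Omega\cdot\Omega$ (or even $G=\Omega g\Omega$ for a suitable $g$), since two dense opens $g\Omega^{-1}$ and $\Omega$ intersect.
\item Show that each root subgroup $U_\alpha$ lies in a product of two maximal tori. Take $s\in\langle U_\alpha,U_{-\alpha}\rangle$ and the torus $T'=sTs^{-1}$. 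Then a generic element of $T'\cdot T$, or rather products of the form $t\,s t' s^{-1}$, sweeps out $U_\alpha$ times $T$ inside the rank-one subgroup. Concretely, in $\mathrm{SL}_2$ one checks that $\begin{pmatrix}1&x\\0&1\end{pmatrix}$ is a product of a diagonal matrix and an element of a conjugate torus.
\item Then $U=\prod_{\alpha>0}U_\alpha$ and $U^-$ are each covered by roughly $2\cdot|\Phi^+|$ tori. Because adjacent tori are the same torus $T$, they merge, which saves tori. The count should come out as $U^-TU$ needing about $N+1$ tori, and $G=\Omega\Omega$ needing $2N+2$.
\end{enumerate}

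\textbf{Main obstacle.} The hard step is the precise bookkeeping in the last bound. Writing $U_\alpha\subseteq T_\alpha T$ with $T_\alpha$ a maximal torus, and absorbing copies of $T$ via $tT_\alpha=T_\alpha' t$ (conjugating the tori by elements of $T$), I would reorganise
\[
U^-TU\subseteq T_{\beta_1}\cdots T_{\beta_{N/2}}\,T\,T_{\alpha_1}\cdots T_{\alpha_{N/2}}.
\]
I expect trouble here, because the conjugated tori depend on $t$, and the tori in a covering must be fixed. To repair this, I would instead use the identity $U_\alpha T=T_\alpha T$ up to a closed subset, or choose $T_\alpha=u T u^{-1}$ with $u\in U_\alpha$, so that $T_\alpha T\supseteq$ a dense subset of $U_\alpha T$. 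Any leftover closed pieces would then be handled by the translation trick $G=\Omega g\Omega$, which costs the extra $+2$.
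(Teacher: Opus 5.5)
Your arguments for $1\le\packingNumber(G)\le\dim G/\rank G\le\coveringNumber(G)$ are fine and match the paper's: the fibre dimension theorem applied to the multiplication map. The gap is in the upper bound on $\coveringNumber(G)$.

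\textbf{Step~2 fails as stated.} Already in $\mathrm{SL}_2$, no product $T_1T_2$ of two maximal tori contains a root subgroup. To see this, pass to $\PGL_2$ acting on $\mathbb{P}^1$ and let $T_2$ fix $r\neq s$. Then $g\in T_1T_2$ forces $g(r)\in T_1r$ and $g(s)\in T_1s$. A $T_1$-orbit is a point or the complement of two points. A $U_\alpha$-orbit is the fixed point $w$ of $U_\alpha$ or the complement of $w$. So $U_\alpha\subseteq T_1T_2$ would force $r=s=w$.

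Concretely, for $1\neq u\in U_\alpha$ and $T_\alpha=uTu^{-1}$ one gets $Tu^{-1}T=T(U_\alpha\setminus\{1\})$. Hence $T_\alpha T=U_\alpha T\setminus uT$, which is dense in $U_\alpha T$ but misses $u$. Consequently:
\begin{itemize}
\item the displayed containment $U^-TU\subseteq T_{\beta_1}\cdots T\cdots T_{\alpha_{N/2}}$ is unavailable;
\item the merging obstruction you correctly identify is left unresolved, and your repair is only gestured at;
\item the accounting is off. Leftover closed pieces cost nothing, because a dense constructible product contains a dense open $W$ and $WW=G$ (\autoref{lem:UU=G}). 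The $+2$ is just $2(N+1)$, not the price of a translation $\Omega g\Omega$; that $g$ would itself have to be absorbed into tori.
\end{itemize}

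\textbf{The missing idea} is to argue with closures and to commute $T$ past the unipotent factors, which $T$ normalises, rather than past the tori. Take any roots $\gamma_1,\dots,\gamma_N$. Put $T_{\gamma_i}=u_iTu_i^{-1}$ with $1\neq u_i\in U_{\gamma_i}$, and $S_k=U_{\gamma_k}\cdots U_{\gamma_N}$, so that $S_kT=TS_k$. By descending induction, $\closure{T_{\gamma_k}\cdots T_{\gamma_N}T}\supseteq S_kT$. Indeed, the inductive hypothesis and $\closure{A}\,\closure{B}\subseteq\closure{AB}$ give $\closure{T_{\gamma_k}\cdots T_{\gamma_N}T}\supseteq T_{\gamma_k}S_{k+1}T=T_{\gamma_k}TS_{k+1}$. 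The closure of the right-hand side contains $\closure{T_{\gamma_k}T}\,S_{k+1}=U_{\gamma_k}TS_{k+1}=S_kT$.

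List the negative roots before the positive ones. This gives $\closure{T_{\gamma_1}\cdots T_{\gamma_N}T}\supseteq U^-UT=\Omega$. So these $N+1=\dim G-\rank G+1$ tori have dense constructible product (\autoref{thm:Chevalley}), and doubling gives the bound.

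Repaired this way, your route is an explicit alternative to the paper's, which uses no root subgroups at all. The paper's \autoref{lem:torusGrows} shows non-constructively that some maximal torus raises the dimension of any proper closed subset $V$. The reason is that $\{g:Vg\subseteq V\}$ is a proper closed set while the union of maximal tori is dense. This yields a chain of at most $\dim G-\rank G+1$ tori with dense product, the same count your construction gives with explicit tori.
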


\begin{remark}
For example, $\packingNumber(\GL_n(\field))=1$, as every split maximal torus contains the (nontrivial) centre of $\GL_n(\field)$, so the product decomposition cannot be unique. This also shows that $\packingNumber(G)$ becomes interesting for a connected reductive group $G$, when $G$ has trivial centre, cf. \autoref{prop:packingBounds}.
\end{remark}

The main result of this paper shows that two other bounds of \autoref{thm:mainBounds} are also (almost) sharp.
\begin{theorem}\label{thm:mainPGL}
    If $\field$ is a field and $n>1$ an integer, 
    then $n+1\leq \packingNumber(\PGL_n(\field))$, 
    and $\coveringNumber(\PGL_n(\field))\leq n+2$ for $|\field|>n^2$.
    In particular, if $\field$ is algebraically closed, then 
    \[\packingNumber(\PGL_n(\field))=\frac{\dim(\PGL_n(\field))}{\rank(\PGL_n(\field))}=n+1\text{,\quad and \quad} 
    \coveringNumber(\PGL_n(\field))\in\{n+1,n+2\}, \]
    analogously, if $\field=\field_q$ is finite, 
    then 
    \[\packingNumber(\PGL_n(\field_q))=n+1\text{,\quad and \quad} 
    \coveringNumber(\PGL_n(\field_q))=n+2 \text{\; for\; $|\field|>n^2$}.\]  
\end{theorem}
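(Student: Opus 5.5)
The statement has two parts: a construction, $n+1\le\packingNumber$ and $\coveringNumber\le n+2$ over an arbitrary field, and the equalities that then follow from \autoref{thm:mainBounds} and from counting over $\field_q$. I would do the construction first. I would reduce the packing claim to a statement about a single coset decomposition. Write $G=\PGL_n(\field)$, let $T=T_1$ be the diagonal torus, and take the remaining tori to be conjugates $T_i=g_iTg_i^{-1}$ with explicit $g_i$. A natural choice is built from a Vandermonde/Fourier-type matrix: take $g_i=u^{i}$ for a suitable unipotent or monomial-times-unipotent element $u$, or take $T_2,\dots,T_{n+1}$ to be conjugates of $T$ by unipotent elements $x_{\lambda_i}$ with distinct parameters $\lambda_i$.

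Injectivity of $\mu$ means the following: if $t_1\cdots t_{n+1}=s_1\cdots s_{n+1}$, then $t_i=s_i$ for all $i$. I would prove this by induction on the number of factors. The aim is to show that the intersections $T_1\cdots T_k\cap T_{k+1}\cdots T_{n+1}$ behave like transversal pieces. More concretely, I would find an invariant: an explicit rational function (for instance ratios of certain matrix entries or minors) that recovers $t_1$ from the product $g$. Then I strip $t_1$ off and recurse.

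I would choose the $g_i$ so that $T_1\cdots T_{n+1}$ becomes a dense open set that is cut out by the nonvanishing of $n$ minors. On that set, the factors are given by solving successive triangular systems. Because the systems are triangular, the resulting formulas are defined over any field, which gives the packing bound for all $\field$. The dimension count $n(n-1)+\dots$ works out: we have $(n+1)(n-1)=n^2-1=\dim G$, so the multiplication map is a bijection onto an open dense subset.

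For the covering bound I add one further torus $T_{n+2}$. The point is to show that every $g\in G$ can be moved into the open set $\Omega=T_1\cdots T_{n+1}$ by right multiplication with some element of $T_{n+2}^{-1}$ (applying the construction on the appropriate side). So I need: for each $g$, there is $t\in T_{n+2}$ with $gt\in\Omega$. The complement of $\Omega$ is the zero set of a product of $n$ minor-polynomials. Restricting this to the torus $t=\diag(1,x_2,\dots,x_n)$ (conjugated) gives a polynomial in the $x_j$ of total degree at most about $n^2-1$ or $n^2$.

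The key claim is that this polynomial is not identically zero for any $g$. Given that, a Schwartz–Zippel/Combinatorial Nullstellensatz argument gives a point with all $x_j\in\field^\times$ avoiding the zero set, provided $|\field|>n^2$. This is exactly the stated hypothesis. I expect this nonvanishing-for-every-$g$ claim to be the main obstacle. The choice of $T_{n+2}$ must be generic relative to the Bruhat-type stratification of the complement of $\Omega$. I would try to show it by using the Bruhat decomposition of $g$: for each cell, exhibit a leading monomial of the restricted polynomial directly.

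Finally, the equalities. Over algebraically closed $\field$, \autoref{thm:mainBounds} gives $\packingNumber\le \dim/\rank=(n^2-1)/(n-1)=n+1$ and $\coveringNumber\ge n+1$. Combined with the construction, this yields $\packingNumber=n+1$ and $\coveringNumber\in\{n+1,n+2\}$.

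Over $\field_q$ I count points. We have $|T|=(q-1)^{n-1}$ and $|G|=q^{n(n-1)/2}\prod_{k=2}^n(q^k-1)$. Injectivity forces $(q-1)^{(n-1)r}\le|G|$. If $r=n+2$, this would require $(q-1)^{n^2+n-2}\le |G|<q^{n^2-1}$, which fails. Hence $\packingNumber\le n+1$.

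Surjectivity with $r=n+1$ requires $(q-1)^{n^2-1}\ge|G|$. But $|G|\ge q^{n(n-1)/2}\prod(q^k-1)>(q-1)^{n^2-1}$, since $q^k-1>(q-1)^k$ for $k\ge2$. So $\coveringNumber\ge n+2$. Together with the construction, this gives the equalities over $\field_q$.
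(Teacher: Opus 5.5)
Your derivation of the equalities from \autoref{thm:mainBounds} and from point counting matches the paper. However, the two constructions, which are the substance of the theorem, are not supplied.

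For the packing you never fix the tori, and the natural readings of your suggestions fail. Suppose $u$ is unitriangular in an eigenbasis $e_1,\dots,e_n$ of $T$, or the $x_{\lambda_i}$ lie in one root subgroup. Then $T$ and all its conjugates $u^iTu^{-i}$ (resp.\ $x_{\lambda_i}Tx_{\lambda_i}^{-1}$) fix a common point of $\Proj{V}$. So all $n+1$ tori lie in a point stabiliser of dimension $n^2-n<(n+1)(n-1)$, and $\mu$ cannot be injective over $\closure{\field}$. Nor does $(n+1)(n-1)=\dim G$ give injectivity: equal dimensions give at best finite generic fibres, and nothing over a general field.

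The missing idea is to fix a projective frame $(P_0,\dots,P_n)$ and take $T_k=\PStab(\{P_i:i\neq k\})$. In the coordinates of \autoref{exmp:PGLmatrix}, these are the diagonal torus and its conjugates by the $n$ transvections fixing $e_i$ ($i\neq k$) and sending $e_k\mapsto e_0$. So the unipotents must point in $n$ different directions, not be powers of one element. Then $T_k\cdots T_0\subseteq H_k=\PStab(\{P_i:i>k\})$, while $T_{k+1}\cap H_k$ fixes the whole frame and is therefore trivial. A short induction then gives injectivity over every field (\autoref{lem:packing}). It also identifies the image as $\{g:g^{-1}(P)\transformTo P\}$ (\autoref{lem:Tk...T0-Open}), which is what makes the covering step computable.

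For the covering, your architecture is the paper's: one extra torus moves every element into the open set, then the Combinatorial Nullstellensatz applies with $|\field|>n^2$. But the step you flag as the main obstacle, nonvanishing of the restricted polynomial for every $g$, is where the proof lives, and no Bruhat case analysis is needed.

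Take $T=\PStab(\{R_0,\dots,R_{n-1}\})$ with $P_0,\dots,P_n,R_0,\dots,R_{n-1}$ in general position, so $t=\mat{R}\diag(x)\mat{R}^{-1}$, where the columns of $\mat{R}$ represent the $R_i$. Moving $g$ into the open set amounts to $t(Q)\transformTo P$ for $Q=g^{-1}(P)$. This is the nonvanishing of the $n^2$ minors $f_{k,j}(x)=\det\bigl((\mat{R}\diag(x)\mat{R}^{-1}\mat{Q})_{J_1}\ \ \mat{P}_{J_2}\bigr)$, where $J_1=\{0,\dots,k\}\setminus\{j\}$ and $J_2=\{k+1,\dots,n\}\setminus\{j\}$. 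Cauchy--Binet (\autoref{lem:nonzeroDetPolynomial}) gives
\[
f_{k,j}(x)=\sum_{|L|=|J_1|}\det\begin{pmatrix}\mat{R}_L & \mat{P}_{J_2}\end{pmatrix}\det\bigl((\mat{R}^{-1}\mat{Q})_{L,J_1}\bigr)\prod_{l\in L}x_l .
\]
General position makes every first factor nonzero. Since $Q$ is a frame, $\mat{R}^{-1}\mat{Q}_{J_1}$ has full column rank, so some coefficient is nonzero, uniformly in $g$.

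Each $f_{k,j}$ is multilinear, so their product has degree at most $n^2<|\field|$ in each variable. Its total degree is $\tfrac12 n(n^2+1)$, so it is this per-variable bound that the Nullstellensatz uses. The factor $f_{n-1,n}=\det(\mat{Q}_{\{0,\dots,n-1\}})\prod_i x_i$ then forces $x\in(\field^\times)^n$.

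Finally, there is a slip in your counting. The inequality $(q-1)^{(n-1)(n+2)}>q^{n^2-1}$ fails for small $q$; for $q=2$ the split tori are trivial. So $\packingNumber(\PGL_n(\field_q))\le n+1$ needs the hypothesis $q>n^2$, which yields $(q-1)^{n+2}>q^{n+1}$.
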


Over algebraically closed fields $\field$, the dimension of the product of generic maximal tori of $\PGL_n(\field)$ grows as fast as possible until reaching $\dim(\PGL_n(\field))$.
\begin{corollary}\label{thm:genericToriTransversal}
    Let $\field$ be an algebraically closed field, 
    and let $1\leq s\leq n+1$ be an integer. 
    Then for generic maximal tori $\tilde T_1,\dots,\tilde T_k$ of $\PGL_n(\field)$, 
    we have 
    \[\dim(\tilde T_1\dots \tilde T_s)=\sum_{i=1}^s \dim(\tilde T_i).\]
    In particular, generic elements $g\in \tilde T_1\dots \tilde T_k$ have \emph{finitely many} decompositions of the form $g=t_1\dots t_k$ where $t_i\in\tilde T_i$, 
    a slight relaxation of the packing property from \autoref{def:packingCOveringTiling}.
\end{corollary}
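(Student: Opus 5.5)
We derive the corollary from \autoref{thm:mainPGL} by a semicontinuity argument. Fix $G=\PGL_n(\field)$ with $\field$ algebraically closed, and let $T$ be the diagonal torus. Every maximal torus is conjugate to $T$, so an $s$-tuple of maximal tori is parametrised by $(h_1,\dots,h_s)\in G^s$ through $\tilde T_i=h_iTh_i^{-1}$. For $h=(h_1,\dots,h_s)$, consider the morphism
\[\mu_h\from T^s\to G,\qquad (t_1,\dots,t_s)\mapsto h_1t_1h_1^{-1}\cdots h_st_sh_s^{-1}.\]
Its image is $\tilde T_1\cdots\tilde T_s$, which is constructible and irreducible, and its closure has dimension at most $s\dim T$. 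So we only need the reverse inequality for generic $h$.

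\textbf{Step 1 (one good tuple).} By \autoref{thm:mainPGL} there are maximal tori $T_1,\dots,T_{n+1}$ inducing a toric packing, so the multiplication map $T_1\times\dots\times T_{n+1}\to G$ is injective. Restricting to the first $s$ factors and multiplying on the right by a fixed element of $T_{s+1}\cdots T_{n+1}$ keeps injectivity, so $T_1\times\dots\times T_s\to G$ is injective. An injective morphism of varieties has finite (singleton) fibres, hence its image has dimension $s\dim T$. We must check that the tori of \autoref{thm:mainPGL} are split maximal tori of $G$; over an algebraically closed field this is automatic, so they are conjugates $h^0_iTh^{0\,-1}_i$ for some $h^0\in G^s$.

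\textbf{Step 2 (genericity).} Consider the morphism $\Phi\from G^s\times T^s\to G^s\times G$ given by $(h,t)\mapsto(h,\mu_h(t))$. By upper semicontinuity of fibre dimension (Chevalley), the set of points $(h,t)$ where the fibre of $\mu_h$ through $t$ has dimension $0$ is open. It is nonempty by Step 1. Its projection to $G^s$ is therefore a nonempty open, hence dense, subset $U$ of the irreducible variety $G^s$. For $h\in U$ the map $\mu_h$ has a zero-dimensional fibre somewhere, so $\dim\closure{\operatorname{im}\mu_h}=s\dim T=\sum_i\dim\tilde T_i$. This is the dimension formula for generic tori.

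\textbf{Step 3 (finiteness).} For the ``in particular'' clause, take $s=k\le n+1$ and $h\in U$. Since $\dim T^s=\dim\closure{\operatorname{im}\mu_h}$, the map $\mu_h$ is dominant onto its image closure with generic fibre dimension $0$. Hence there is a dense open subset of the image over which the fibres are finite, which is exactly the statement that generic $g\in\tilde T_1\cdots\tilde T_k$ have finitely many decompositions.

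The main obstacle is making Step 2 rigorous: we need the correct form of semicontinuity, namely that $\{x:\dim_x\Phi^{-1}(\Phi(x))\le 0\}$ is open in the source, and we need the passage from a single good parameter to a dense open set of parameters. Irreducibility of $G^s$ and of $T^s$ is what makes "generic" meaningful here, and both are clear since these groups are connected.
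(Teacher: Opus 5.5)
Your proof is correct and follows the same route as the paper: parametrise $s$-tuples of tori by $G^s$, obtain one good tuple from the packing construction (you via injectivity of the multiplication map, the paper via the dimension count \eqref{eq:dim(Tk...T0)}), and spread it to a nonempty open set of parameters by semicontinuity of fibre dimension (you use Chevalley's upper semicontinuity on the source together with openness of projections, which the paper packages as \autoref{lem:generic}). The only step you leave implicit, and which the paper carries out, is passing from generic $h\in G^s$ to generic tuples in the variety $(G/N_G(T))^s$ of maximal tori; this is immediate because the property depends only on the tori and the quotient map $G^s\to (G/N_G(T))^s$ is open and surjective.
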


\begin{remark}
    Note that $\frac{\dim(\PGL_n(\field))}{\rank(\PGL_n(\field))}=\frac{n^2-1}{n-1}=n+1$, so \autoref{lem:dim(V1...Vs)} shows the sharpeness of the statement, i.e. that for $s>n+1$ , the conclusion cannot hold.
\end{remark}

\subsection{Motivation}
Product decomposition in algebraic groups is intensively studied. We list some of them without aiming to be comprehensive.

For a connected, reductive algebraic group $G$, the \emph{Bruhat} decomposition is $G=BWB$ where $B$ is a Borel subgroup and $W$ the corresponding Weyl group. 
The big Bruhat cell $BwB$ is a decomposition of an open subset of $G$ in the form of a double coset, but it gives a dense decomposition of the form $B^wB$, the product of two Borel subgroups.
In this fashion, \citeauthor{BRUNDAN1995755}
investigated dense decomposition of the form $X^gX$ for proper reductive subgroups $X$ is an irreducible reductive group $G$ \cite{BRUNDAN1995755}. 
\citeauthor{LiebeckSaxlSeitz} determined all factorisations of a simple algebraic group $G$ of the form $G=XY$ of a  product of two maximal closed subgroups $X$ and $Y$ \cite{LiebeckSaxlSeitz}. 
\citeauthor{guralnick2013products} investigated the product of conjugacy classes, and the product of centralisers in finite and simple algebraic groups \cite{guralnick2013products}.

There are various variants of the covering number. 
The \emph{covering number} $\operatorname{cn}(G)$ of a group $G$ is the minimal $r$ such that $C^r=G$ for every nontrivial conjugacy class $C$ of $G$. 
\citeauthor{LEV199660} showed that $\operatorname{cn}(\operatorname{PSL}_n(\field))=n$ for finite and infnite fields $\field$ \cite{LEV199660}. 
The \emph{covering number} $\operatorname{cn}(G,C)$ of a non-central conjugacy class $C$ of a simple algebraic group $G$ (over an algebraically closed field)  is the minimal $r$ such that $G=C^r$. 
\citeauthor{Liebeck2023CoveringNumbers} showed that $\operatorname{cn}(G,C)\leq 120 \frac{\dim(G)}{\dim(C)}$ \citeauthor{Liebeck2023CoveringNumbers}.
\citeauthor{GordeevSaxl2002a} defined the \emph{extended covering number} $\operatorname{ecn}(G)$ (to be the smallest integer $r$ such that the product $C_1C_2\dots C_r=G$ whenever $C_1,\dots,C_r$ 
are conjugacy classes of $G$ not contained in any proper normal subgroup of $G$) and showed that $\operatorname{ecn}(G)\leq C \rank(G)$ for Chevallley groups $G$ \cite{GordeevSaxl2002a}.

For finite groups, decomposition into various subgroups was studied. \citeauthor{GuralnickMalle2012} showed that every finite simple group is a product of $3$ conjugacy classes \cite{GuralnickMalle2012}. 
\citeauthor{LiebeckPyber2001}
showed that many finite subgroups of $\operatorname{GL}(n,\field_{p^r})$ generated by $p$-elements are actually the product of $25$ of its Sylow $p$-subgroups \cite{LiebeckPyber2001}.
\citeauthor{GaronziMartinoLevyMarotiSimion} improved this to $4$ Sylow $p$-subgroups \cite{GaronziMartinoLevyMarotiSimion}. 
They also studied \emph{conjugate product factorisation} of a group $G$ using a subgroup $A$ (i.e. $G=A^{g_1}\dots A^{g_k}$ for some $g_i\in G$)  
when $a$ is solvable/nilpotent and gave upper bounds to its length $k$ \cite{GaronziMartinoLevyMarotiSimion}. 
\citeauthor{Vavilov2012Unitriangular} considered this problem using conjugates of unitriangular subgroups of Chevalley groups over commutative rings of stable rank $1$  
\cite{Vavilov2012Unitriangular}. 
\citeauthor{smolensky} extended this to some twisted Chevalley groups over finite fields or the field of complex numbers \cite{smolensky}.
Note the toric covering number (\autoref{def:packingCOveringTiling}) is a special case of this conjugate product factorisarion problem as maximal tori are known to be conjugate. 
\citeauthor{Nikolov2007} considered the decomposition of finite quasisimple group of classical type into a product of boundedly many conjugates of central quotients of $\operatorname{SL}$ \cite{Nikolov2007}.
If the multiplication map is bijective, then the product is called a \emph{tiling}. The (non)existence of tiling in finite groups was studied by many authors e.g. \cite{rothaus1966combinatorial}, \cite{fang2026tiling}, \cite{KissMatolcsiMatolcsiSomlai2024}.

\subsection{Outline}
The paper is organised as follows. 
In \autoref{sec:prelim}, we fix some notations and recall useful tools. 
In \autoref{sec:reductive}, we discuss general bounds for irreducible reductive groups and prove \autoref{thm:mainBounds} with dimensional reasoning. 

The main part is \autoref{sec:PGL} where we study $\PGL(V)$ via its regular action on the projective frames in $\Proj{V}$. 
In \autoref{sec:PGLpacking}, considering carefully chosen points, we realise optimal-size toric packings that actually cover an open subset in $\PGL(V)$. Then in \autoref{sec:PGLcovering}, 
we use the Combinatorial Nullstellensatz together with various computations to show that the remaining closed set can actually be reached with the addition of a single maximal torus to complete the proof of \autoref{thm:mainPGL}. 
We prove \autoref{thm:genericToriTransversal} about generic maximal tori in \autoref{sec:PGLgeneric} using a standard argument. 

We conclude the paper with some questions, open problems and conjectures in \autoref{sec:conjectures}.

\subsection{Acknowledgement}
The author is grateful for Endre Szabó for the introduction to the topic and for the many fruitful discussions and ideas. 

\section{Preliminaries}\label{sec:prelim}

\subsection{Notation}
$\N=\{0,1,2,\dots\}$ is the set of natural numbers. 
For $n\in\N$, 
we write $[n]\leteq \{i\in \N:1\leq i\leq n\}$.
$\field$ denotes an arbitrary field (not necessarily algebraically closed). We denote the finite field of order $q$ by $\field_q$.

In a group $G$, we write $h^g=g^{-1}hg$ for the conjugate of $h\in G$ by $g\in G$. For a subgroup $H\leq G$, write $H^g=\{h^g:h\in H\}\leq G$,.
We apply maps from the left.

Let $X$ be a variety. For a constructible subset $A\subseteq X$, we denote its \emph{Zariski-closure} by $\closure{A}$, and write  $\dim(A)=\dim(\closure{A})$ for its \emph{dimension}.

Let $\mat{M}$ be a matrix whose rows and indexed by $R$ and columns by $C$. 
For $I\subseteq R$, $J\subseteq C$, 
we denote by $\mat{M}_{I,J}$ the submatrix of $\mat{M}$ whose rows correspond to $I$, and columns to $J$. 
Write $\mat{M}_J\leteq \mat{M}_{R,J}$.
$\mat{I}_a$ denotes the $a\times a$ identity matrix, 
$\diag(x_1,\dots,x_a)$ the $a\times a$ diagonal matrix, and
$\mat{0}_{a\times b}$ the $a\times b$ matrix having only $0$ entries.
$\GL_n(\field)$ is the group of $n\times n$ invertible matrices over the field $\field$, and $\PGL_n(\field)\leteq \GL_n(\field)/\{\lambda\mat{I}_n:\lambda\in\field^{\times}\}$ is the \emph{projective linear group}. 
We write  $\GL(V)$ and $\PGL(V)$ for the analogous group associated to transformations of the vector space $V$.

\subsection{Tools}

\paragraph{Determinants}
The functoriality of the exterior power gives the following identity, which will be useful to control coefficients of certain polynomials.
\begin{theorem}[Cauchy–Binet formula, {\cite[\S2.9]{shafarevich_remizov_2013}}]\label{thm:Cauchy-Binet}
    Let $1\leq k\leq \min\{a,b,c\}$ be integers, 
    let $I\subseteq [a]$,
    $J\subseteq [c]$ with $|I|=|J|=k$,  
    and let $\mat{A}\in R^{a\times b}$, $\mat{B}\in R^{b\times c}$ be matrices with entries from a commutative ring $R$.
    Then 
    \[\det(\mat{A}\mat{B})_{I,J}=\sum_{K\in [b] : |K|=k} \det (\mat{A}_{I,K}) \det (\mat{B}_{K,J}).\]
\end{theorem}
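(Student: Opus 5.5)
The plan is to derive the formula from the functoriality of the exterior power, $\Lambda^k(\mat{A}\mat{B})=\Lambda^k(\mat{A})\Lambda^k(\mat{B})$, and then read off one matrix entry.

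\textbf{Step 1: the exterior power of a matrix.} Regard $\mat{A}$ as an $R$-linear map $R^b\to R^a$ and $\mat{B}$ as an $R$-linear map $R^c\to R^b$. For each $n$, the module $\Lambda^k R^n$ is free with basis $e_K=e_{k_1}\wedge\dots\wedge e_{k_k}$, where $K=\{k_1<\dots<k_k\}\subseteq[n]$.

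The matrix of $\Lambda^k\mat{M}$ in these bases has $(I,K)$ entry $\det(\mat{M}_{I,K})$. To check this, expand
\[\Lambda^k\mat{M}(e_K)=\mat{M}e_{k_1}\wedge\dots\wedge \mat{M}e_{k_k}\]
by multilinearity. Collect the coefficient of $e_I$ using alternation: the terms with a repeated index vanish, and reordering the remaining indices introduces a sign. The resulting signed sum over permutations is exactly the Leibniz expansion of $\det(\mat{M}_{I,K})$.

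\textbf{Step 2: functoriality.} Both maps $\Lambda^k(\mat{A}\mat{B})$ and $\Lambda^k\mat{A}\circ\Lambda^k\mat{B}$ send $v_1\wedge\dots\wedge v_k$ to $\mat{A}\mat{B}v_1\wedge\dots\wedge\mat{A}\mat{B}v_k$. Such pure wedges generate $\Lambda^k R^c$, so the two maps coincide:
\[\Lambda^k(\mat{A}\mat{B})=\Lambda^k\mat{A}\circ\Lambda^k\mat{B}.\]

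\textbf{Step 3: compare entries.} Take the $(I,J)$ entry of both sides. On the left, Step 1 gives $\det((\mat{A}\mat{B})_{I,J})$. On the right, matrix multiplication gives $\sum_{K}\det(\mat{A}_{I,K})\det(\mat{B}_{K,J})$, where $K$ runs over the $k$-subsets of $[b]$. This is exactly the stated formula.

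\textbf{Main obstacle.} The only point needing care is the sign bookkeeping in Step 1. Once that is settled, the rest is formal. Alternatively, one can prove the identity over the polynomial ring $\mathbb{Z}[a_{ij},b_{ij}]$ and then specialise to $R$, which avoids any dependence on properties of the particular ring.
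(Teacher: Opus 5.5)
Your proof is correct and follows the route the paper indicates. The paper gives no proof of its own: it cites Shafarevich--Remizov and remarks only that the identity comes from functoriality of the exterior power. That is exactly your argument: identify the entries of $\Lambda^k\mat{M}$ with the $k\times k$ minors, then compare the $(I,J)$ entries of $\Lambda^k(\mat{A}\mat{B})=\Lambda^k\mat{A}\circ\Lambda^k\mat{B}$.
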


\paragraph{Nonvanishing locus}
We can control points outside the vanishing set of a finite set of polynomials provided field is large enough.
\begin{theorem}[Combinatorial nullstellensatz, {\cite[Theorem~1.2]{alon1999combinatorial}}]
\label{thm:combinatorialNullstellensatz}
    Let $\field$ be an arbitrary field.  
    Let $f\in\field[X_1,\dots,X_n]$ be a polynomial and 
    $t_1,\dots,t_n\in\N$ be natural numbers  
    such that $\deg(f)=\sum_{i=1}^n t_i$ and the coefficient of $\prod_{i=1}^n X_i^{t_i}$ in $f$ is nonzero. 
    Let $S_i\subseteq \field$ be subsets with $|S_i|>t_i$ for every $1\leq i\leq n$. 
    Then there is $x\in \prod_{i=1}^n S_i$ such that $f(x)\neq 0$.
    
\end{theorem}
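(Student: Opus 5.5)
The statement is the Combinatorial Nullstellensatz of Alon, and I would prove it by the standard polynomial-reduction argument. First discard elements so that $|S_i|=t_i+1$ exactly; it clearly suffices to find a nonvanishing point in the smaller grid. Define
\[g_i(X_i)\leteq \prod_{s\in S_i}(X_i-s)=X_i^{t_i+1}-h_i(X_i),\qquad \deg h_i\leq t_i .\]
Assume for contradiction that $f$ vanishes on all of $\prod_{i=1}^n S_i$.

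\textbf{Step 1 (a vanishing lemma).} I would first show: if $P\in\field[X_1,\dots,X_n]$ has $\deg_{X_i}P\leq t_i$ for every $i$ and vanishes on $\prod_i S_i$ with $|S_i|=t_i+1$, then $P=0$. The proof is by induction on $n$. Write $P=\sum_{j=0}^{t_n} P_j(X_1,\dots,X_{n-1})X_n^j$. For each fixed $(x_1,\dots,x_{n-1})$ in the smaller grid, the one-variable polynomial $P(x_1,\dots,x_{n-1},X_n)$ has degree at most $t_n$ but at least $t_n+1$ roots. Hence all its coefficients $P_j(x_1,\dots,x_{n-1})$ vanish. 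By induction each $P_j=0$.

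\textbf{Step 2 (reduction).} Starting from $f$, repeatedly replace any occurrence of $X_i^{t_i+1}$ in a monomial by $h_i(X_i)$, until every monomial has $X_i$-degree at most $t_i$ for all $i$. The result $\bar f$ has the following properties.
\begin{enumerate}[label=(\roman*)]
\item Since $g_i$ vanishes on $S_i$, the polynomial $f-\bar f$ lies in the ideal $(g_1,\dots,g_n)$. So $\bar f$ agrees with $f$ on the grid and vanishes there.
\item $\deg_{X_i}\bar f\leq t_i$ for every $i$.
\item Each replacement step strictly lowers the total degree of the terms it produces, because $\deg h_i<t_i+1$.
\end{enumerate}
Consequently, the degree-$\deg f$ part of $\bar f$ consists exactly of those top-degree monomials of $f$ that were never touched. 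These are precisely the monomials with all exponents at most $t_i$.

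\textbf{Step 3 (conclusion).} The monomial $\prod_i X_i^{t_i}$ has degree $\deg f=\sum_i t_i$ and satisfies the exponent bounds, so it is never touched by the reduction. Hence its coefficient in $\bar f$ equals its nonzero coefficient in $f$, and $\bar f\neq 0$. This contradicts Step 1, which forces $\bar f=0$.

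The only delicate point is the bookkeeping in Step 2: one must check that no reduction step can produce new contributions to $\prod_i X_i^{t_i}$. That is exactly guaranteed by the strict degree drop in (iii), combined with the hypothesis $\deg f=\sum_i t_i$.
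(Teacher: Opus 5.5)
Your argument is correct, and it is essentially Alon's original proof. The paper does not reprove this theorem; it only cites Alon. Your proof combines his vanishing Lemma~2.1 with the reduction modulo the $g_i$ from the proof of his Theorem~1.1. The one difference is in the last step: you get the contradiction directly from $\bar f\neq 0$ against the vanishing lemma, whereas Alon first concludes $f=\sum_i h_i g_i$ with $\deg h_i\leq \deg f-\deg g_i$ and then compares coefficients of $\prod_i X_i^{t_i}$.

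You leave the termination of the reduction implicit, but it is routine. For example, perform the reduction as successive division by the monic polynomials $g_i\in\field[X_i]$; this never raises the total degree or the degrees in the other variables.
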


\paragraph{Linear algebraic groups}

Let $\field$ be an algebraically closed field. 
Let $\multiplicativeGroup(\field)$ be the multiplicative algebraic group that is isomorphic to $\field^\times$ as groups.
A \emph{torus} in $G$ is a closed subgroup isomorphic to $\multiplicativeGroup(\field)^s$ for some $s\in \N$. 
A \emph{maximal} torus is a torus maximal with respect to containment.
An algebraic group $G$ is \emph{reductive}, if the largest connected, normal, unipotent subgroup is trivial.

We will use the following facts from \cite[\S6,~\S8,~\S14]{Malle_Testerman_2011}.
In an irreducible algebraic group $G$, every maximal tori are conjugate, 
and $\rank(G)$ is defined to be their common dimension. %cite[Corollary~6.5]{Malle_Testerman_2011}.
If $G$ is irreducible and reductive, then 
the union of all maximal tori is dense in $G$, %\cite[Corollary~ 6.11, Corollary~14.10]{Malle_Testerman_2011} 
and every maximal torus $T$ is its own centraliser in $G$, %\cite[Corollary 8.13]{{Malle_Testerman_2011}}
in particular the ${\Centre(G)}$ is contained in every maximal torus of $G$. %\cite[Proposition~6.20]{Malle_Testerman_2011}
,
.
For further details about algebraic groups, the reader is referred to 
\cite{Malle_Testerman_2011} and \cite{Humphreys1975}.

\paragraph{Morphism of varieties}
We will use the following standard fundamental results about morphism of varieties.
\begin{theorem}[Fibre dimension {\cite[\S I.8~Theorems~2-3.]{mumford1999redbook}}]
\label{thm:fibreDim}
Let $f\from X\to Y$ be a morphism of algebraic varieties (over an algebraically closed field). 
For every $y\in f(X)$ and every irreducible component $Z$ of $f^{-1}(y)$,  
\[\dim(X)-\dim(f(X))\leq \dim (Z).
\]
Moreover, there is nonempty open set $U$ in $\closure{f(X)}$ with 
$U\subseteq f(X)$  where equality holds, i.e.   for every $u\in U$, we have   
\[\dim(X)-\dim(f(X))=\dim(f^{-1}(u)).\]
\end{theorem}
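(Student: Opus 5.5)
We treat the two assertions of \autoref{thm:fibreDim} separately, following the classical route through Krull's principal ideal theorem for the inequality and a relative Noether normalisation for the generic equality. First we reduce to the case where $X$ is irreducible, as in Mumford's formulation. We then replace $Y$ by $Y'\leteq\closure{f(X)}$, which is irreducible of dimension $m\leteq\dim(f(X))$, so that $f\from X\to Y'$ is dominant. Write $n\leteq\dim(X)$. Since both statements are local on $Y'$ and on $X$, we may also shrink to affine open neighbourhoods whenever convenient.

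\emph{Lower bound.} Fix $y\in f(X)$ and an irreducible component $Z$ of $f^{-1}(y)$.
\begin{enumerate}[label=(\roman*)]
\item Choose an affine open neighbourhood of $y$ in $Y'$ and regular functions $g_1,\dots,g_m$ on it such that $y$ is an isolated point of $V(g_1,\dots,g_m)$. These exist by induction on $m$: at each step pick $g_i$ vanishing at $y$ but not identically on any component through $y$ of the current zero set, and apply Krull's principal ideal theorem to drop the dimension by one.
\item Pass to an affine open $U\subseteq Y'$ with $U\cap V(g_1,\dots,g_m)=\{y\}$.
\item On $f^{-1}(U)$ we then have $f^{-1}(y)=V(f^*g_1,\dots,f^*g_m)$.
\item By Krull's theorem applied $m$ times on the irreducible variety $f^{-1}(U)$, every component of this zero set has dimension at least $n-m$. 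Hence $\dim(Z)\geq n-m$.
\end{enumerate}

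\emph{Generic equality.} Now take $X$ and $Y'$ affine with $f$ dominant. Then $k[Y']\hookrightarrow k[X]$ is injective, and $\operatorname{trdeg}_{K(Y')}K(X)=n-m$.
\begin{enumerate}[label=(\roman*)]
\item Apply Noether normalisation over the field $K(Y')$, and clear the finitely many denominators involved. This yields a nonzero $h\in k[Y']$ and elements $t_1,\dots,t_{n-m}\in k[X]_h$ that are algebraically independent over $k[Y']_h$, such that $k[X]_h$ is a finite module over $k[Y']_h[t_1,\dots,t_{n-m}]$.
\item Geometrically, over $D(h)\subseteq Y'$ the map $f$ factors as a finite morphism $f^{-1}(D(h))\to D(h)\times\mathbb{A}^{n-m}$ followed by the projection.
\item The finite morphism is injective on rings, hence surjective by lying-over. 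Therefore $D(h)\subseteq f(X)$, and we take $U\leteq D(h)$.
\item For $u\in U$, the fibre $f^{-1}(u)$ maps finitely onto $\{u\}\times\mathbb{A}^{n-m}$. Finite morphisms preserve dimension, so $\dim f^{-1}(u)\leq n-m$.
\item Combined with the lower bound, this gives $\dim f^{-1}(u)=n-m$, with every component of the fibre of exactly this dimension.
\end{enumerate}

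The main obstacle is the relative Noether normalisation in the generic step. Normalising over the field $K(Y')$ is standard. The care lies in spreading the normalisation out to an open subset of $Y'$: we must check that finitely many integral equations and the algebraic independence both survive inverting a single $h$. I would handle this by writing integral dependence equations for the finitely many generators of $k[X]$ and taking $h$ to be the product of all denominators together with the leading coefficients.

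Finally, the passage from reducible to irreducible $X$ needs the components $X_i$ with $\dim\closure{f(X_i)}=m$. For these we intersect the opens obtained above. From this intersection we remove the closures of the images of the components with smaller-dimensional image, which is possible since those closures are proper closed subsets of $Y'$.
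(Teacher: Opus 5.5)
For irreducible $X$ your argument is correct. It is the classical proof: Krull's principal ideal theorem applied to $f^{*}g_1,\dots,f^{*}g_m$ gives the lower bound. For the generic statement you use Noether normalisation of $k[X]\otimes_{k[Y']}K(Y')$, spread out over some $D(h)$, followed by lying-over and the dimension bound for finite maps. This is essentially the argument of Mumford's Red Book, \S I.8, Theorems 2--3. The paper gives no proof of its own here; it only cites that reference.

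The part that does not work is the passage from reducible to irreducible $X$. It cannot be repaired, because the statement is false for reducible $X$.
\begin{itemize}
\item \emph{Lower bound.} Take $X=\mathbb{A}^2\sqcup\mathbb{A}^1\to\mathbb{A}^1$, the first projection on the plane and the identity on the line. Then $\dim(X)-\dim(f(X))=1$, yet every fibre has a $0$-dimensional component coming from the line.
\item \emph{Generic equality.} Take $X=\mathbb{A}^1\sqcup\mathbb{A}^2\to\mathbb{A}^1$, the identity on the line and the constant map $0$ on the plane. Again $\dim(X)-\dim(f(X))=1$, but every fibre over $u\neq 0$ is a single point.
\end{itemize}
Your final recipe yields, over the open set, fibres of dimension $\max\{\dim X_i:\dim\closure{f(X_i)}=m\}-m$. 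This is strictly smaller than $\dim(X)-m$ as soon as a top-dimensional component has lower-dimensional image. Your lower-bound argument also has no reducible analogue at all.

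The right fix is to assume $X$ irreducible, as Mumford's convention for varieties does. This covers every use in the paper, whose sources are $T_1\times\dots\times T_s$, $V_1\times\dots\times V_s$, $T^s$ and $G^s\times T^s$, all irreducible.

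A smaller point concerns ``take $X$ affine'' in the generic step, which has to be a cover argument. A single affine open of $X$ can miss whole components of $f^{-1}(u)$, so it only bounds part of the fibre. Instead:
\begin{itemize}
\item shrink $Y'$ to an affine $Y'_0$;
\item cover $f^{-1}(Y'_0)$ by finitely many affine opens $X_1,\dots,X_r$, each dominant onto $Y'_0$ because $X$ is irreducible;
\item run your argument on each to obtain $h_i$, and set $U\leteq\bigcap_i D(h_i)$.
\end{itemize}
Then $\dim f^{-1}(u)=\max_i\dim(f^{-1}(u)\cap X_i)\leq n-m$, and your lower bound gives equality.
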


\begin{corollary}[Chevalley, {\cite[\S I.8~Corollary~2.]{mumford1999redbook}}]\label{thm:Chevalley}
    Let $f\from X\to Y$ be a morphism of varieties (over an algebraically closed field). 
    If $Z\subseteq X$ is constructible, then so is $f(Z)\subseteq Y$.
\end{corollary}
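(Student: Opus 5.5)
The plan is a Noetherian induction on the dimension of the closure of the image, with the open set supplied by \autoref{thm:fibreDim} doing the real work.

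\emph{Reduction to irreducible closed pieces.} A constructible set $Z\subseteq X$ is a finite union of locally closed subsets. Each of these is itself a variety, and decomposing further we may take each one irreducible. The restriction of $f$ to each piece is again a morphism of varieties, and images commute with finite unions. It therefore suffices to prove the following claim: \emph{for every morphism $f\from X\to Y$ with $X$ irreducible, the image $f(X)$ is constructible.}

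\emph{Induction.} We prove the claim by induction on $d\leteq\dim(\closure{f(X)})$. Note that $\closure{f(X)}$ is irreducible because $X$ is. If $d=0$, then $\closure{f(X)}$ is a single point, so $f(X)$ is that point, which is closed and hence constructible.

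For $d>0$, we apply \autoref{thm:fibreDim}. It gives a nonempty open $U\subseteq\closure{f(X)}$ with $U\subseteq f(X)$. Let $Y'\leteq \closure{f(X)}\setminus U$. This is a proper closed subset of the irreducible set $\closure{f(X)}$, so every irreducible component of $Y'$ has dimension $<d$. We then have
\[
f(X)=U\cup f\bigl(f^{-1}(Y')\bigr).
\]
The preimage $f^{-1}(Y')$ is closed in $X$. Write it as the union of its finitely many irreducible components $X_1,\dots,X_m$. Each $f(X_j)$ lies in $Y'$, so $\dim\closure{f(X_j)}<d$. By the induction hypothesis applied to $f|_{X_j}$, each $f(X_j)$ is constructible. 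Now $U$ is open in the closed set $\closure{f(X)}$, hence locally closed in $Y$. Thus $f(X)$ is a finite union of constructible sets, and so is constructible.

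\emph{Main obstacle.} The only nontrivial input is that $f(X)$ contains a nonempty open subset of its closure. This is exactly the ``moreover'' part of \autoref{thm:fibreDim}, so once that is granted the argument is formal. The remaining care is bookkeeping: checking that $U$ is locally closed in $Y$, and that the induction measure, the dimension of the closure of the image, strictly drops for every irreducible component of $f^{-1}(Y')$.
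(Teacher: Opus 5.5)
Your proof is correct and essentially matches the cited source. The paper gives no proof of its own; in Mumford's Red Book, Chevalley's theorem is deduced from part (i) of the fibre-dimension theorem (the nonempty open $U\subseteq f(X)$ in $\closure{f(X)}$) by exactly this induction on $\dim\closure{f(X)}$, using $f(X)=U\cup f\bigl(f^{-1}(\closure{f(X)}\setminus U)\bigr)$, and your restriction of \autoref{thm:fibreDim} to irreducible sources, where it is valid, is handled correctly.
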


\section{Decompositions in reductive groups}\label{sec:reductive}
\begin{summary}
    In this section, we prove the general bounds of \autoref{thm:mainBounds} for irreducible reductive groups. 
    Our key tool for the dimensional analysis is the fibre dimension theorem, and the fact that the union of all maximal tori is dense. 
    We also prove general dimensional estimates that will be useful for the whole paper.
\end{summary}

\subsection{Toric packings}

\begin{proposition}\label{prop:packingBounds}
    If $G$ is a nontrivial linear algebraic group (over an algebraically closed field), 
    then \[1\leq \packingNumber(G)\leq \frac{\dim(G)}{\rank(G)}.\]
    The lower bound is sharp if $G$ is irreducible reductive nontrivial centre $\Centre(G)$.
\end{proposition}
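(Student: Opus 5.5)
The plan has three parts: the lower bound, the upper bound via fibre dimensions, and sharpness when the centre is nontrivial.

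\emph{Lower bound.} A single split maximal torus $T_1$ always induces a packing, because the multiplication map $T_1\to G$ is the inclusion and hence injective. This gives $\packingNumber(G)\geq 1$.

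\emph{Upper bound.} Suppose split maximal tori $T_1,\dots,T_r$ induce a toric packing. I would apply \autoref{thm:fibreDim} to the multiplication morphism $\mu\from X\leteq T_1\times\dots\times T_r\to G$.
\begin{itemize}
\item $X$ is irreducible, being a product of tori, and all maximal tori are conjugate, so $\dim(X)=r\cdot\rank(G)$.
\item By injectivity every nonempty fibre is a single point, of dimension $0$.
\item By Chevalley (\autoref{thm:Chevalley}) the image $\mu(X)$ is constructible, so $\dim(\mu(X))$ makes sense.
\end{itemize}
The first inequality of \autoref{thm:fibreDim} then gives
\[r\cdot\rank(G)-\dim(\mu(X))\leq 0.\]
Since $\dim(\mu(X))\leq\dim(G)$, we get $r\leq \dim(G)/\rank(G)$. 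Taking the maximum over all packings yields the upper bound.

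The only care needed is the degenerate case $\rank(G)=0$. There the maximal tori are trivial, every $r$ induces a packing, and the right-hand side should be read as $+\infty$. Alternatively, one can note that the intended setting (connected reductive, nontrivial) has positive rank. I would make this remark explicitly.

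\emph{Sharpness when $\Centre(G)\neq 1$.} Let $G$ be irreducible reductive with nontrivial centre. By the preliminaries, $\Centre(G)$ is contained in every maximal torus. Take $r\geq 2$ tori, pick $1\neq z\in\Centre(G)$, and compare
\[(z,z^{-1},1,\dots,1)\quad\text{and}\quad(1,1,\dots,1).\]
Both lie in $T_1\times\dots\times T_r$, because $z,z^{-1}\in T_1\cap T_2$. Both are mapped by $\mu$ to $1$, and they are distinct since $z\neq 1$. So no two or more tori induce a packing, and $\packingNumber(G)=1$.

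I expect no real obstacle. The only subtle points are the irreducibility and dimension count of $X$, and the convention for $\rank(G)=0$.
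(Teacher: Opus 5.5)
Your proposal is correct and follows essentially the same route as the paper: the fibre dimension theorem applied to the injective multiplication map $T_1\times\dots\times T_r\to G$ gives the upper bound, and the collision of $(z,z^{-1},1,\dots,1)$ with $(1,\dots,1)$ for $1\neq z\in\Centre(G)$ gives sharpness. You also make explicit two points the paper leaves implicit: the trivial lower bound $\packingNumber(G)\geq 1$, and the degenerate case $\rank(G)=0$, where the quotient has to be read as $+\infty$.
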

\begin{remark}
    This shows that for an irreducible reductive group $G$, the quantity 
    $\packingNumber(G/\Centre(G))$ is much more interesting. 
\end{remark}
\begin{proof}
    For the upper bound, suppose $T_1,\dots,T_s$ induce a toric packing in $G$. 
    Now by \autoref{def:packingCOveringTiling}, the multiplication map $\mu\from T_1\times \dots \times T_S\to G$ is injective, thus 
    \autoref{thm:fibreDim} 
    $s\rank(G)=\dim(T_1\times \dots \times T_S)\leq \dim(G)$, 
    thus $s\leq \frac{\dim(G)}{\rank(G)}$. 
    We are done by \autoref{def:packingCOveringTiling}. 

    For the second part, note that every maximal torus contains the centre $\Centre(G)$. 
    If $T_1,\dots,T_S$ induce a toric packing with $s>1$, then the multiplication map $\mu\from T_1\times \dots \times T_s\to G$ is not injective, as $\mu(z,z^{-1},1,\dots,1)=1$ for every $z\in\Centre(G)$.
\end{proof}
\begin{remark}
    In \autoref{prop:constructionPGL},  we show that the upper bound is sharp for $\PGL_n(\field)$ for $n>1$.
\end{remark}

\subsection{Toric coverings}
\begin{lemma}\label{lem:dim(V1...Vs)}
    If $V_1,\dots,V_s$ are irreducible closed subsets of an  algebraic group $G$ (over an algebraically closed field), then 
    $\closure{V_1\dots V_s}$ is irreducible and 
    \[\dim(V_1\dots V_s)\leq \sum_{i=1}^s \dim(V_i)\]
    with equality if and only if generic elements $g\in V_1\dots V_s$ have only finitely many decompositions of the form $g=v_1\dots v_s$ where $v_i\in V_i$.
\end{lemma}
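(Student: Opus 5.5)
The plan is to realise the product set as the image of a morphism and then apply the fibre dimension theorem. Let $X\leteq V_1\times\dots\times V_s$. Since each $V_i$ is an irreducible closed subset of $G$ and we work over an algebraically closed field, $X$ is an irreducible variety with $\dim(X)=\sum_{i=1}^s\dim(V_i)$. The multiplication map $\mu\from X\to G$, $(v_1,\dots,v_s)\mapsto v_1\dots v_s$, is a morphism, being a restriction of the group multiplication $G^s\to G$. By definition its image is $\mu(X)=V_1\dots V_s$.

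For irreducibility, we note that the image of an irreducible space under a continuous map is irreducible, and the closure of an irreducible set is irreducible. Hence $\closure{V_1\dots V_s}=\closure{\mu(X)}$ is irreducible. By \autoref{thm:Chevalley}, $\mu(X)$ is constructible, so the convention $\dim(V_1\dots V_s)=\dim(\closure{\mu(X)})$ makes sense. The same fact guarantees that $\mu(X)$ contains a nonempty open subset of its closure, so ``generic elements of $V_1\dots V_s$'' is meaningful.

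For the inequality, we apply the first part of \autoref{thm:fibreDim} to $\mu$ at any $y\in\mu(X)$. Every component $Z$ of $\mu^{-1}(y)$ satisfies $\dim(X)-\dim(\mu(X))\le\dim(Z)$. Since $\dim(Z)\ge 0$, this gives $\dim(\mu(X))\le\dim(X)=\sum_i\dim(V_i)$.

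For the equality criterion, we use the second part of \autoref{thm:fibreDim}. It provides a nonempty open $U\subseteq\closure{\mu(X)}$ with $U\subseteq\mu(X)$ and $\dim(\mu^{-1}(u))=\dim(X)-\dim(\mu(X))$ for all $u\in U$. The fibre $\mu^{-1}(u)$ is in bijection with the set of decompositions $u=v_1\dots v_s$ with $v_i\in V_i$. A closed subvariety has dimension $0$ exactly when it is finite and nonempty, and nonemptiness holds because $u\in\mu(X)$. We then argue both directions.
\begin{itemize}
\item If equality holds, fibres over $U$ have dimension $0$, hence are finite, so generic elements have finitely many decompositions.
\item Conversely, suppose generic elements have finitely many decompositions, meaning there is a dense open $U'\subseteq\closure{\mu(X)}$ on which the fibres are finite. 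Since $\closure{\mu(X)}$ is irreducible, $U\cap U'$ is nonempty. At a point $u\in U\cap U'$ we get $\dim(X)-\dim(\mu(X))=0$.
\end{itemize}

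The only delicate point is this converse direction. It requires interpreting ``generic'' as ``on a dense open subset of the closure'' and using irreducibility to intersect the two open sets. Everything else is a direct citation of earlier results.
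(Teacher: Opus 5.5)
Your route is the paper's: view $V_1\dots V_s$ as the image of the multiplication morphism $\mu$ on the irreducible variety $X=V_1\times\dots\times V_s$, then apply \autoref{thm:fibreDim}. The irreducibility part and the equality criterion are correct. Your converse direction spells out what the paper leaves implicit: you intersect the open set $U$ from \autoref{thm:fibreDim} with the dense open set $U'$ of points having finite fibres, which is nonempty by irreducibility of $\closure{\mu(X)}$.

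However, your proof of the inequality contains an invalid inference. The first part of \autoref{thm:fibreDim} gives $\dim(X)-\dim(\mu(X))\le \dim(Z)$, which is a \emph{lower} bound on the dimension of fibre components. Combined with $\dim(Z)\ge 0$, it says nothing about the sign of $\dim(X)-\dim(\mu(X))$: from $a\le b$ and $b\ge 0$ you cannot conclude $a\ge 0$.

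The bound you need comes from the second part of \autoref{thm:fibreDim}. For $u$ in the open set $U\subseteq\mu(X)$, you have $\dim(X)-\dim(\mu(X))=\dim(\mu^{-1}(u))\ge 0$, since $\mu^{-1}(u)$ is nonempty. This is exactly how the paper proves the inequality. You already state these facts in your next paragraph, including nonemptiness of the fibre because $u\in\mu(X)$, so the repair is to use that observation here. Alternatively, cite the standard fact that the dominant morphism $X\to\closure{\mu(X)}$ embeds the function field of $\closure{\mu(X)}$ into that of $X$, so $\dim\closure{\mu(X)}\le\dim(X)$.
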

\begin{proof}
    Consider the morphism $\mu\from V_1\times \dots \times V_s\mapsto G$ given by $(v_1,\dots,v_s)\mapsto v_1\dots v_s$. 
    Note that $V\leteq V_1\times \dots \times V_s$ is irreducible (as all $V_i$ are) and that $f(V)=V_1\dots V_s)$, thus $\closure{f(V)}=\closure{V_1\dots V_s}$ is also irreducible. 
    \autoref{thm:fibreDim} applies and gives an nonepmty open set $U\subseteq \closure{f(V)}$ with $U\subseteq f(V)$ such that 
    \[\sum_{i=1}^s \dim(V_i) - \dim(V_1\dots V_s)
    =\dim(V)-\dim(f(V))
    =\dim(f^{-1}(g))\geq 0\]
    for every $g\in U$ as stated.
    Finally, note that in case of equality, $\dim(f^{-1}(g))=0$, thus $f^{-1}(g)=\{(v_1,\dots,v_s)\in V:v_1\dots v_s=g\}$ is finite.
\end{proof}

\begin{lemma}\label{lem:VH=V}
    Let $G$ be an algebraic group (over an algebraically closed field), 
    let $V$ be a closed irreducible subset of $G$, 
    let $H$ be a closed irreducible subgroup of $G$.
    Then $\dim(VH)=\dim(V)$ if and only if $VH=V$. 
    
    In other words, $\dim(VH)>\dim(V)$ if and only if there exist $h\in H$ with $Vh\not\subseteq V$.
\end{lemma}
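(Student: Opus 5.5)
The plan is to prove the nontrivial direction by comparing $V$ with the Zariski closure $\closure{VH}$, using irreducibility together with equality of dimensions.

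\textbf{The easy direction.} If $VH=V$, then $\dim(VH)=\dim(V)$ holds trivially.

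\textbf{The converse.} Suppose $\dim(VH)=\dim(V)$. The steps are as follows.
\begin{enumerate}
\item Since $1\in H$, we have $V=V\cdot 1\subseteq VH\subseteq \closure{VH}$.
\item By \autoref{lem:dim(V1...Vs)}, applied with $V_1=V$ and $V_2=H$ (both closed and irreducible), the closure $\closure{VH}$ is irreducible.
\item By the paper's convention, $\dim(\closure{VH})=\dim(VH)=\dim(V)$.
\item So $V$ is a closed irreducible subset of the irreducible variety $\closure{VH}$, and it has the same dimension. A proper closed subset of an irreducible variety has strictly smaller dimension, so $V=\closure{VH}$.
\item Hence $VH\subseteq \closure{VH}=V$. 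Combined with $V\subseteq VH$, this gives $VH=V$.
\end{enumerate}

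\textbf{The reformulation.} Since $V\subseteq VH$, we always have $\dim(VH)\geq\dim(V)$. So $\dim(VH)>\dim(V)$ is equivalent to $\dim(VH)\neq\dim(V)$. By the equivalence above, this holds if and only if $VH\neq V$. Because $V\subseteq VH$ always, $VH\neq V$ means exactly that $VH\not\subseteq V$. This in turn says that $Vh\not\subseteq V$ for some $h\in H$.

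\textbf{Main point of care.} There is no real obstacle. The one thing to watch is that $VH$ itself need not be closed, so the argument must pass through $\closure{VH}$. Its irreducibility is exactly what \autoref{lem:dim(V1...Vs)} supplies.
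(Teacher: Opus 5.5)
Your proof is correct and follows essentially the same route as the paper: you sandwich $V\subseteq VH\subseteq \closure{VH}$, get irreducibility of $\closure{VH}$ from \autoref{lem:dim(V1...Vs)}, conclude $V=\closure{VH}$ by equality of dimensions, and deduce the reformulation from $V\subseteq VH$. Your explicit remark that a proper closed subset of an irreducible variety has strictly smaller dimension spells out a step the paper leaves implicit.
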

\begin{proof}
    Suppose $\dim(VH)=\dim(V)$. 
    Since $1\in H$, we have $V\subseteq VH\subseteq \closure{VH}$. Since $V$ and $H$ are irreducible, so is $\closure{VH}$ by \autoref{lem:dim(V1...Vs)}. 
    Thus the assumption $\dim(V)=\dim(\closure{VH})$ gives  
    $V=\closure{VH}$, hence the chain of containment above forces $VH=V$. 
    The other direction is evident.

    For the other statement, note that $1\in H$ implies $V\subseteq VH$. So $\dim(VH)\neq \dim(V)$ is equivalent to $\dim(VH)>\dim(V)$, whereas 
    and $VH\neq V$ is equivalent to the existence of $h\in H$ with $Vh\not\subseteq V$.
\end{proof}

\begin{lemma}\label{lem:torusGrows}
    Let $G$ be a reductive algebraic group (over an algebraically closed field), 
    let $V$ be a closed subset of $G$ with $\dim(V)<\dim(G)$.
    Then there exists a maximal torus $T$ of $G$ with $\dim(VT)>\dim(V)$.
\end{lemma}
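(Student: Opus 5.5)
We argue by contradiction using \autoref{lem:VH=V} together with the density of the union of maximal tori. Suppose that $\dim(VT)=\dim(V)$ for every maximal torus $T$ of $G$. We may assume $G$ is connected: otherwise replace $G$ by the identity component $G^\circ$. Maximal tori lie in $G^\circ$, and a maximal torus $T$ with $\dim(VT)>\dim(V)$ can be found after restricting to a suitable translate.

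Since $V$ need not be irreducible, we first reduce to the irreducible case. Write $V=V_1\cup\dots\cup V_m$ as a union of irreducible components. Then $\dim(VT)=\max_i \dim(V_iT)$, so it suffices to find a maximal torus $T$ with $\dim(V_1T)>\dim(V)$ for one suitable component. Alternatively, we can use the following assertion directly: if $\dim(VT)=\dim(V)$ for all $T$, then $VT\subseteq V$ for all $T$. Indeed, for each component $V_i$ and each torus $T$ (which is closed and irreducible), $\closure{V_iT}$ is irreducible by \autoref{lem:dim(V1...Vs)} and contains $V_i$. Hence either $\dim(V_iT)>\dim(V_i)$, or $V_iT=V_i$ by \autoref{lem:VH=V}. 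In the first case $\closure{V_iT}$ is an irreducible closed set of dimension at most $\dim(V)$ strictly containing $V_i$. The main obstacle is to rule out components of smaller dimension growing into another component. To handle this, we apply the argument only to a component $V_1$ of maximal dimension $\dim(V_1)=\dim(V)$. Then $\dim(V_1T)\le\dim(VT)=\dim(V)=\dim(V_1)$, and \autoref{lem:VH=V} gives $V_1T=V_1$ for every maximal torus $T$.

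Now let $H$ be the subgroup generated by all maximal tori of $G$. Since $V_1T=V_1$ for every $T$, we get $V_1H=V_1$. Since $G$ is connected and reductive, the union of all maximal tori is dense in $G$. The subgroup $H$ is generated by irreducible closed subsets containing $1$, so it is closed and connected, and it contains a dense subset of $G$. Hence $H=G$. Fixing $v\in V_1$ (which is nonempty; if $V=\emptyset$ the statement is trivial since $\dim(\emptyset)<\dim(VT)$ is false, and we take the convention that a nonempty $V$ is meant), we get $vG\subseteq V_1$. Therefore $\dim(V_1)\ge\dim(G)$, contradicting $\dim(V)<\dim(G)$. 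So some maximal torus $T$ satisfies $\dim(V_1T)>\dim(V_1)=\dim(V)$, and then $\dim(VT)\ge\dim(V_1T)>\dim(V)$.

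The step needing the most care is the claim $H=G$, namely that the closed subgroup generated by all maximal tori is all of $G$. We handle it via the standard fact that a subgroup generated by irreducible closed subvarieties through $1$ is closed and connected. Combined with the stated density of the union of the maximal tori, this yields $H=G$.
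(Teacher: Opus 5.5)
Your argument is correct and is essentially the paper's. The paper also passes to a top-dimensional irreducible component. It notes that $X=\{g\in G: Vg\subseteq V\}=\bigcap_{v\in V}v^{-1}V$ is closed of dimension at most $\dim(V)<\dim(G)$, so $X$ cannot contain the union $U$ of all maximal tori, which has dimension $\dim(G)$. It then applies \autoref{lem:VH=V} to a maximal torus through a point of $U\setminus X$, which is just the contrapositive of your contradiction.

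The detour through the subgroup $H$ generated by all maximal tori is unnecessary. If $V_1T=V_1$ for every $T$, then already $vU\subseteq V_1$, hence $\dim(V_1)\geq\dim(U)=\dim(G)$. This also makes your only-sketched reduction to connected $G$ superfluous.
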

\begin{proof}
    Note that if $V_i$ is an irreducible component of $V$ with $\dim(V_i)=\dim(V)$ and $T$ is a maximal torus of $G$ with $\dim(V_iT)>\dim(V_i)$, then 
    $\dim(V)=\dim(V_i)<\dim(V_iT)\leq \dim(VT)$.
    
    So we may assume that $V$ is irreducible.
    Define $X\leteq \{g\in G:V_ig\subseteq V\}$. 
    Note that $X=\{g\in G:\forall v\in V_i\quad vg\in V\}=\bigcap_{v\in V}v^{-1}V$. 
    Since $V$ is closed by assumption, so is $X$. 
    On the other hand, for any $v\in V$, 
    we have $\dim(X)\leq \dim(v^{-1}V)=\dim(V)<\dim(G)$. 

    Let $U\subseteq G$ be the union of all maximal tori in $G$. 
    Since $G$ is reductive, it is known that $\dim(U)=\dim(G)$. 
    Now $\dim(X)<\dim(U)$, so there is $t\in U\setminus X$, 
    i.e. there exists a maximal torus $T$ containing $t$ such that $Vt\not\subseteq V$. 
    Since $T$ is a closed irreducible subgroup of $G$, \autoref{lem:VH=V} applies with $H\leteq T$ and shows that $\dim(VT)>\dim(V)$ as stated.
\end{proof}

\begin{lemma}\label{lem:UU=G}
    If $U$ is a nonempty open set in an irreducible algebraic group (over an algebraically closed field), 
    then $UU=G$.
\end{lemma}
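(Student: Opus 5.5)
The plan is the standard translation argument. Fix an arbitrary $g\in G$; we must write $g=u_1u_2$ with $u_1,u_2\in U$. Equivalently, we need some $u_1\in U$ with $u_1^{-1}g\in U$, i.e. we need $U\cap gU^{-1}\neq\emptyset$. So the whole proof reduces to showing that the two sets $U$ and $gU^{-1}$ meet.

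First we check that $gU^{-1}$ is a nonempty open subset of $G$. Inversion $x\mapsto x^{-1}$ is an isomorphism of varieties $G\to G$, and so is left translation $x\mapsto gx$ (its inverse is left translation by $g^{-1}$). Both are therefore homeomorphisms in the Zariski topology. Hence $U^{-1}$ is open and nonempty, and so is $gU^{-1}$.

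Next we use irreducibility. Since $G$ is irreducible, any two nonempty open subsets of $G$ intersect; otherwise $G$ would be the union of the two proper closed sets given by their complements. Applying this to $U$ and $gU^{-1}$ gives an element $u_1\in U\cap gU^{-1}$. Writing $u_1=gu_2^{-1}$ with $u_2\in U$, we get $g=u_1u_2\in UU$. Since $g$ was arbitrary, $G\subseteq UU$, and the reverse inclusion is trivial, so $UU=G$.

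I do not expect a genuine obstacle here. The only point that needs care is justifying that inversion and translation are homeomorphisms, which follows because they are morphisms of varieties with morphism inverses. It is also worth noting that the statement implicitly names the group $G$; irreducibility of $G$ (rather than mere connectedness) is exactly what the argument uses.
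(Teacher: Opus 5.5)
Your proposal is correct and follows essentially the same argument as the paper. The paper also observes that $\{gu^{-1}:u\in U\}$ is a nonempty open set, uses irreducibility to get a point in its intersection with $U$, and writes $g=u_1u_2$ with $u_1,u_2\in U$.
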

\begin{proof}
    Pick $g\in G$. 
    Since $G$ is irreducible, $U$ is dense in $G$. 
    The set $U_g\leteq \{gu^{-1}:u\in U\}\isom U$ is nonempty open in $G$,
    thus $U\cap U_g\neq\emptyset$. 
    Hence there exist $u_1,u_2\in U$ with $u_1=gu_2^{-1}$, i.e. $g=u_1u_2$.
\end{proof}

\begin{proposition}\label{prop:TCNbounds}
    If $G$ is a nontrivial irreducible reductive algebraic group (over an algebraically closed field), 
    then 
    \[\frac{\dim(G)}{\rank(G)}\leq \coveringNumber(G)\leq 2(\dim(G)-\rank(G)+1)\leq 2\dim(G).\]
\end{proposition}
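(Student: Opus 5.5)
The three inequalities are independent, so I will prove them one at a time. The last one, $2(\dim(G)-\rank(G)+1)\leq 2\dim(G)$, is immediate: $G$ is nontrivial, so $\rank(G)\geq 1$.

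\emph{Lower bound.} Suppose $T_1,\dots,T_r$ induce a toric covering, so $T_1\dots T_r=G$. Each $T_i$ is closed and irreducible. By \autoref{lem:dim(V1...Vs)},
\[\dim(G)=\dim(T_1\dots T_r)\leq \sum_{i=1}^r\dim(T_i)=r\rank(G).\]
Hence $r\geq \dim(G)/\rank(G)$. Taking the infimum over all coverings gives the lower bound. If no covering exists the infimum is $+\infty$ and the bound holds trivially; the upper bound below shows that this case does not occur.

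\emph{Upper bound.} I first build a dense product with few tori, then double it.

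\textbf{Step 1 (dense product).} Start with $V_1\leteq T_1$, a maximal torus, so $\dim(V_1)=\rank(G)$. Suppose $V_k\leteq \closure{T_1\dots T_k}$ is closed and irreducible (irreducibility comes from \autoref{lem:dim(V1...Vs)}) with $\dim(V_k)<\dim(G)$. Then \autoref{lem:torusGrows} gives a maximal torus $T_{k+1}$ with $\dim(V_kT_{k+1})>\dim(V_k)$. I also need
\[\closure{V_kT_{k+1}}=\closure{T_1\dots T_{k+1}}.\]
This holds because right multiplication by an element of $T_{k+1}$ is a homeomorphism, so $\closure{T_1\dots T_k}\,t\subseteq \closure{T_1\dots T_kt}$ for each $t\in T_{k+1}$. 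Each step raises the dimension by at least one. Starting from $\rank(G)$, at most $\dim(G)-\rank(G)$ further tori are needed. So some $m\leq \dim(G)-\rank(G)+1$ satisfies $\closure{T_1\dots T_m}=G$.

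\textbf{Step 2 (open subset).} By Chevalley's theorem (\autoref{thm:Chevalley}), $T_1\dots T_m$ is constructible, being the image of $T_1\times\dots\times T_m$ under multiplication. A dense constructible subset of the irreducible variety $G$ contains a nonempty open set $U$.

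\textbf{Step 3 (doubling).} Apply \autoref{lem:UU=G}: $G=UU\subseteq (T_1\dots T_m)(T_1\dots T_m)$. So the $2m$ tori $T_1,\dots,T_m,T_1,\dots,T_m$ induce a toric covering. This gives
\[\coveringNumber(G)\leq 2m\leq 2(\dim(G)-\rank(G)+1).\]

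The main obstacle is bookkeeping rather than a real difficulty. \autoref{lem:torusGrows} is stated for a closed set $V$, so I apply it to the closure $V_k$, and the displayed identity of closures ensures the dimension gain carries over to the actual product. A second point: \autoref{def:packingCOveringTiling} asks for \emph{split} maximal tori, and the covering in Step 3 repeats tori. Over an algebraically closed field every maximal torus is split, and the definition does not require the $T_i$ to be distinct, so both points are fine.
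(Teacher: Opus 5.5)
Your proof is correct and follows the paper's argument essentially step by step: the lower bound via \autoref{lem:dim(V1...Vs)}, a chain of at most $\dim(G)-\rank(G)+1$ tori built with \autoref{lem:torusGrows} applied to the closures, Chevalley's theorem to find a nonempty open $U$ inside the dense product, and \autoref{lem:UU=G} to double it. Your check that $\closure{V_kT_{k+1}}=\closure{T_1\dots T_{k+1}}$ fills in a step the paper leaves implicit, and your constant $2m\leq 2(\dim(G)-\rank(G)+1)$ is the correct one; the paper's proof writes $-1$ there, which is a typo.
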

\begin{proof}
    Suppose $T_1\dots T_s=G$ for some maximal tori $T_1,\dots,T_s$ of $G$. 
    Then \autoref{lem:dim(V1...Vs)} shows that $\dim(G)=\dim(T_1\dots T_s)\leq \sum_{i=1}^s T_1=s\rank(G)$, so 
    $\dim(G)/\rank(G)\leq s$ and the lower bound follows.

    For the upper bound, we construct a strictly increasing sequence of maximal tori. 
    First, we recursively build a sequence $T_1,\dots,T_d$ of maximal tori such that 
    \[\rank(G)=\dim(T_1)<\dim(T_1T_2)<\dim(T_1T_2T_3)<\dots <\dim(T_1\dots T_d)=\dim(G).\]
    Start with an arbitrary maximal torus $T_1$. 
    Recursively, for every $i\geq 1$, if $\dim(T_1\dots T_i)\leq \dim(G)$, 
    then apply \autoref{lem:torusGrows} for $V\leteq \closure{T_1\dots T_{k-1}}$ to obtain a maximal torus $T_{i+1}$ with $\dim(V)<\dim(VT_{i+1})$. 
    So $\dim(T_1\dots T_i)=\dim(V)<\dim(VT_{i+1})=\dim(T_1\dots T_{i+1})$. 
    Since these dimensions strictly grow, after a finite step, this process will terminate at $i=d$. 
    Evidently, $d\leq \dim(G)-\rank(G)+1$.

    Since $T_1\dots T_d$ is the image of the multiplication map $T_1\times \dots T_d\to G$, it is constructible by \autoref{thm:Chevalley}, 
    thus it contain a nonempty $U$ which is open in $\closure{T_1\dots T_d}$. 
    Since $G$ is irreducible and $\dim(\closure{T_1\dots T_d})=\dim(G)$, we see 
    that $\closure{T_1\dots T_d}=G$, i.e. $U$ is a nonempty open set in $G$. 
    Thus \autoref{lem:UU=G} shows that 
    $T_1\dots T_dT_1\dots T_d=UU=G$. 
    Hence $\coveringNumber(G)\leq 2d\leq 2(\dim(G)-\rank(G)-1)$. 
    The last inequality follows from the fact that $\rank(G)\geq 1$, since $G$ is a nontrivial irreducible reductive group.
\end{proof}

We can now prove one of the main statements.
\begin{proof}[Proof of \autoref{thm:mainBounds}]
    This follows from \autoref{prop:packingBounds} and \autoref{prop:TCNbounds}.
\end{proof}

\section{Decompositions in $\PGL(V)$}
\label{sec:PGL}

\begin{summary}
    In this section, we prove \autoref{thm:mainPGL}, the main result of this paper. 
    We do so in two steps. First, in \autoref{sec:PGLpacking}, we consider an explicit construction for toric packings to cover an open subset of $\PGL(V)$. 
    Then in \autoref{sec:PGLcovering}, we modify this construction to cover the full group using a single maximal torus. 
    Finally, in \autoref{sec:PGLgeneric}, we prove \autoref{thm:genericToriTransversal}.

    The main idea is to study $\PGL(V)$ via its regular action on the projective frames in $\Proj{V}$, using which maximal tori can be characterised as pointwise stabilisers of $\dim(V)$ projectively independent points. Most constructions work over arbitrary fields.
\end{summary}

In this section, the field $\field$ is not assumed to be algebraically closed unless it is explicitly stated. 
We start by fixing the framework of this section.

\begin{definition}
    Let $V$ be an $n$-dimensional vector space over $\field$ with $n\geq 2$. 
    Write $\Proj{V}$ for the projectivisation of $V$, i.e. points $P\in \Proj{V}$ of this $(n-1)$-dimensional projective space are the $1$-dimensional vector subspaces of $V$. 
    A subset $\cP\subseteq \Proj{V}$ is a \emph{projectively independent set},
    if the $1$-dimensional vector subspaces corresponding to elements of $\cP$ are linearly independent. 
    A \emph{projective frame} $(P_0,\dots,P_{n})$ of $\Proj{V}$ is an ordered tuple of 
    points $P_i\in\Proj{V}$, 
    such that any $n$ of these points are projectively independent $\Proj{V}$.
\end{definition}
\begin{remark}\label{rem:frameCoordinates}
    For a projective frame $(P_0,\dots,P_n)$ we may pick representatives $e_k\in V\setminus\{0\}$ of $P_k\in\Proj{V}$ such that $e_1,\dots,e_n$ is a basis of $V$ and $e_0=e_1+\dots +e_n\in V$. If $e'_k$ is any other such choice of vectors, then there exists $\lambda\in\field^{\times}$ so that $e'_k\lambda e_k$ for every $0\leq k\leq n$. Every such choice of vectors gives rise to a projective frame. 
    Every projective frame then gives rise to homogeneous coordinates of points of $P\in \Proj{V}$, i.e. the coordinates of any lift of $P$ to a nonzero $v\in P$ in the basis $e_1,\dots,e_n$ (defined up to nonzero scalar multiples).
\end{remark}

\begin{definition}
    The left action of the general linear group $\GL(V)$ on $V$ induces a left action of the projective linear group $\PGL(V)\leteq \GL(V)/\{\lambda\id_V:\lambda\in\field^\times\}$ on $\Proj{V}$. 
    For a subset $\cP\subseteq \Proj{V}$, write $\PStab(\cP)\leteq \{g\in \PGL(V):\forall P\in \mathcal{P}\quad g(P)=P\}$ for the \emph{pointwise stabiliser} subgroup.
\end{definition}

\begin{remark}\label{rem:regularActionFrame}
    The left induced action $\PGL(V)$ on the set of projective frames of $\Proj{V}$ is regular. 
    More explicitly, for every two (not necessarily different) projective frames $(P_0,\dots,P_{n})$ and  $(Q_0,\dots,Q_{n})$, 
    there is a unique $g\in \PGL(V)$ such that $g(P_i)=Q_i$ for every $i\in\{0,\dots,n\}$. 
    In particular, if $\cP\subseteq\Proj{V}$ contains (points forming) a projective frame of $\Proj{V}$, then  $\PStab(\cP)=1$ is the trivial group.

\end{remark}

\begin{remark}\label{rem:maxTorusPGL}
    If $\cP\subseteq \Proj{V}$ is a projectively independent set of size $|\cP|=\dim(V)=n$, then any (nonzero) lift of the points of $\cP$ to $V$ determines a vector space basis of $V$. 
    In this basis, every element of $\PStab(\cP)$ is represented by diagonal matrices, and all such matrices represent an element of $\PStab(\cP)$. 
    Thus $\PStab(\cP)\isom (\field^{\times})^{n-1}$ is (a split) maximal torus in the algebraic group $\PGL(V)$. 
    Every split maximal torus arises in this way.
\end{remark}

\subsection{Toric packings in $\PGL(V)$}
\label{sec:PGLpacking}
\begin{summary}
    In a fixed projective frame consisting of $n+1$ points, 
    we show that the $n+1$ maximal tori that fix $n$ of these points induce a toric packing in a completely coordinate-free way. 
    We demonstrate the results using concrete matrices and consider concrete cardinality estimates over finite fields. 
\end{summary}

\begin{lemma}[Packing]\label{lem:packing}
    For a vector space $V$, 
    pick subsets $\cP_0,\dots,\cP_s\subseteq \Proj{V}$ such that 
    for every $0\leq k<s$, 
    $\cP_{k+1}\cup \bigcap_{i=0}^{k} \cP_i$ contains (points forming) a projective frame of $\Proj{V}$. 
    Then $\PStab(\cP_s),\dots,\PStab(\cP_0)$ induce a packing in $\PGL(V)$, i.e. the multiplication map $\PStab(\cP_s)\times \dots \times \PStab(\cP_0)\to \PGL(V)$ is injective.
\end{lemma}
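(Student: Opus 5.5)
The plan is to argue by induction on $s$, peeling off the leftmost factor. The only tool needed is \autoref{rem:regularActionFrame}: an element of $\PGL(V)$ fixing every point of a set that contains a projective frame is trivial. For brevity I write $\cI_k\leteq\bigcap_{i=0}^{k}\cP_i$ for $0\leq k\leq s$.

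For $s=0$ the multiplication map is the identity map of $\PStab(\cP_0)$, so there is nothing to prove. For the induction step, suppose that
\[t_s t_{s-1}\dots t_0=t'_s t'_{s-1}\dots t'_0\qquad\text{with } t_i,t'_i\in\PStab(\cP_i).\]
Rearranging gives
\[x\leteq (t'_s)^{-1}t_s \;=\; t'_{s-1}\dots t'_0\, t_0^{-1}\dots t_{s-1}^{-1}.\]
The left-hand expression lies in $\PStab(\cP_s)$. On the right-hand side, every factor $t_i^{\pm1}$ or $t_i'^{\pm1}$ with $i\leq s-1$ fixes every point of $\cP_i$. Each such factor therefore fixes every point of $\cI_{s-1}\subseteq\cP_i$, and so the whole right-hand product does too.

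Hence $x$ fixes every point of $\cP_s\cup\cI_{s-1}$. By hypothesis, applied with $k=s-1$, this set contains a projective frame. So $x=1$ by \autoref{rem:regularActionFrame}, that is, $t_s=t'_s$.

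Cancelling $t_s$ on the left gives $t_{s-1}\dots t_0=t'_{s-1}\dots t'_0$. The family $\cP_0,\dots,\cP_{s-1}$ still satisfies the hypothesis of the lemma, since the condition for $k<s-1$ is part of the original assumption. The induction hypothesis then gives $t_i=t'_i$ for all $i<s$, which proves injectivity.

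I do not expect a real obstacle here. The one point needing care is the choice of which factor to isolate. Isolating the leftmost factor $t_s$ makes the complementary side a product of elements all fixing the nested intersection $\cI_{s-1}$ pointwise, and this is exactly the shape of the hypothesis. Isolating the rightmost factor instead would not match the stated frame condition.
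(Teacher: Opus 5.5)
Your proposal is correct and follows essentially the same argument as the paper. Both proceed by induction on $s$ and isolate the leftmost factor. The quotient then lies in $\PStab(\cP_s)\cap\PStab(\cI_{s-1})=\PStab(\cP_s\cup\cI_{s-1})$, which is trivial by \autoref{rem:regularActionFrame}, and one cancels $t_s$ and applies the induction hypothesis to $\cP_0,\dots,\cP_{s-1}$.
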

\begin{proof}
    We use induction on $s$ with $s=0$ being trivial, so let  $s>0$. 
    Suppose that $t_s\dots t_0=t'_s\dots t'_0$ for $t_k,t'_k\in T_k$ for every $0\leq k\leq s$. 
    Write $T_k\leteq \PStab(\cP_k)$ and $\cI_{s-1}\leteq \bigcap_{k=0}^{s-1}\cP_k$.
    Now \[t_s^{-1}t_s'=(t_{s-1}\dots t_0)(t'_{s-1}\dots t'_0)^{-1}\in  T_s\cap \PStab(\cI_{s-1}),\] because 
    $T_0\dots T_{s-1}\subseteq \PStab(\cI_{s-1})$ by definition and the stabiliser is a subgroup.
    On the other hand, $T_s\cap \PStab(\cI_{s-1})=\PStab(\cP_s\cup \cI_{s-1})=1$ by the assumption and \autoref{rem:regularActionFrame}. 
    Thus $t_s^{-1}t_s'=(t_{s-1}\dots t_0)(t'_{s-1}\dots t'_0)^{-1}=1$, 
    hence $t_s=t'_s$ and $t_{s-1}\dots t_0=t'_{s-1}\dots t'_0$. 
    Finally, by induction, we have $t_i=t'_i$ for $i\leq s-1$ as stated.
\end{proof}
\begin{remark}\label{rem:numerOfTori}
    We want to maximise $s$ in the setup of \autoref{lem:packing} while assuming $|\cP_k|\leq n\leteq \dim(V)$ for every $0\leq k\leq s$. 
    Define $\cI_k\leteq \bigcap_{i=0}^k \cP_i$. 
    Now as the projective frame consists of $n+1$ points, the condition in case $k\neq s$ shows  
    $n+1
    \leq |\cP_{k+1}\cup \cI_k|
    = |\cP_{k+1}|+|\cI_k| - |\cI_{k+1}|
    \leq n + |\cI_k| - |\cI_{k+1}|$, 
    thus 
    $|\cI_{k+1}|\leq |\cI_k|-1$. 
    Since $|\cI_0|=|\cP_0|\leq n$ by assumption, 
    we see that $|\cI_k|\leq n-k$ for every $k$. 
    Hence $0\leq |\cI_s|\leq n-s$ shows the bound $s\leq n$
    with equality if and only if 
    $|\cP_{k}|=n$ 
    and $|\cI_k|=n-k$ for every $0\leq k\leq s$. 
\end{remark}

To simplify the following discussion, we take a specific choice in \autoref{prop:constructionPGL} attaining the optimum of \autoref{rem:numerOfTori}, but many of the conclusions below hold in the general case as well.

\begin{proposition}[Maximal torus packing]\label{prop:constructionPGL}
    Let $V$ be an $ n$-dimensional vector space over  an arbitrary field $\field$. 
    For a projective frame $P\leteq (P_0,\dots,P_{n})$ of $\Proj{V}$, 
    define the split maximal torus $T_k\leteq \PStab(\{P_i:i\neq k\})$ of $\PGL(V)$ for every $0\leq k\leq n$.
    Then $T_n,\dots,T_0$ induce a packing in $\PGL(V)$.    
    In particular, 
    \[\packingNumber(\PGL(V))\geq n+1.\]
\end{proposition}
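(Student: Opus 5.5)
The plan is to apply \autoref{lem:packing} with $s\leteq n$ and $\cP_k\leteq\{P_i:i\neq k\}$ for $0\leq k\leq n$, so that $\PStab(\cP_k)=T_k$. First I will check that each $T_k$ really is a split maximal torus. Since $(P_0,\dots,P_n)$ is a projective frame, any $n$ of its points are projectively independent, so $\cP_k$ is a projectively independent set of size $n=\dim(V)$. By \autoref{rem:maxTorusPGL}, $T_k=\PStab(\cP_k)$ is then a split maximal torus of $\PGL(V)$.

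Next comes the hypothesis of \autoref{lem:packing}. For $0\leq k<n$ we have
\[\bigcap_{i=0}^{k}\cP_i=\{P_j: j>k\}.\]
Hence
\[\cP_{k+1}\cup\bigcap_{i=0}^{k}\cP_i=\{P_j:j\neq k+1\}\cup\{P_j:j>k\}\supseteq\{P_0,\dots,P_n\},\]
because $P_{k+1}$ lies in the second set. So this union contains all $n+1$ points of the frame, and in particular contains a projective frame.

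\autoref{lem:packing} then shows that the multiplication map $T_n\times\dots\times T_0\to\PGL(V)$ is injective, i.e.\ $T_n,\dots,T_0$ induce a packing. These are $n+1$ split maximal tori, so by \autoref{def:packingCOveringTiling} we get $\packingNumber(\PGL(V))\geq n+1$.

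There is no real obstacle. The only point needing care is the index bookkeeping in the intersections: the ordering $T_n,\dots,T_0$ in the statement matches the ordering $\PStab(\cP_s),\dots,\PStab(\cP_0)$ in \autoref{lem:packing}. The choice of $\cP_k$ is consistent with \autoref{rem:numerOfTori}, since $|\cI_k|=n-k$, which attains that bound.
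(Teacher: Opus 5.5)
Your proposal is correct and is essentially the paper's proof. You check that each $\cP_k=\{P_i:i\neq k\}$ is projectively independent of size $n$, so $T_k$ is a split maximal torus by \autoref{rem:maxTorusPGL}, and that $\cP_{k+1}\cup\bigcap_{i=0}^{k}\cP_i$ contains the whole frame, and then you invoke \autoref{lem:packing} with $s=n$ (the paper's text cites the proposition itself at that step, but \autoref{lem:packing} is the intended reference).
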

\begin{proof}
    Since $(P_0,\dots,P_n)$ is a projective frame, 
    the set $\cP_k\leteq \{P_i:i\neq k\}$ is projectively independent,  
    so $T_k=\PStab(\cP_k)$ is indeed a maximal torus in $\PGL(V)$ by \autoref{rem:maxTorusPGL}. 
    For every $k<n$, we have 
    $\cP_{k+1}\cup \bigcap_{i=0}^{k} \cP_i=\{P_0,\dots,P_n\}$, the set of points of the projective frame, 
    hence \autoref{prop:constructionPGL} is applicable and gives the statement.
\end{proof}

\begin{lemma}\label{lem:TCN(PGL(n,q))<=n+1}
    Let $1<n\in\N$ and the prime power $q\in \N$ satisfy $(q-1)\log(q-1)\geq n+1$ (e.g. if $q\geq n+2)$. 
    Then \[\packingNumber(\PGL_n(\field))\leq n+1.\]
\end{lemma}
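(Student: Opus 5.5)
The plan is a counting argument. Over $\field=\field_q$, every split maximal torus of $\PGL_n(\field_q)$ is isomorphic to $(\field_q^\times)^{n-1}$ by \autoref{rem:maxTorusPGL}. Hence $|T|=(q-1)^{n-1}$. An injective multiplication map $T_1\times\dots\times T_r\to\PGL_n(\field_q)$ forces
\[(q-1)^{(n-1)r}\leq|\PGL_n(\field_q)|.\]
If $T_1,\dots,T_r$ induce a packing, then so do any $T_1,\dots,T_{r'}$ with $r'\le r$, since restricting $\mu$ to $T_1\times\dots\times T_{r'}\times\{1\}\times\dots\times\{1\}$ stays injective. So it suffices to rule out $r=n+2$. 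That is, I will show
\[(q-1)^{(n-1)(n+2)}>|\PGL_n(\field_q)|.\]

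First, bound the group order. We have
\[|\PGL_n(\field_q)|=q^{n(n-1)/2}\prod_{i=2}^n(q^i-1)<q^{n(n-1)/2}\prod_{i=2}^n q^i=q^{n^2-1}.\]
The inequality is strict because $n\ge2$, so the product contains at least one factor $q^i-1<q^i$.

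Second, note that $(n-1)(n+2)=n^2+n-2$. It therefore suffices to show $(q-1)^{n^2+n-2}\geq q^{n^2-1}$. Rewriting, this is
\[(q-1)^{n-1}\geq\Bigl(\frac{q}{q-1}\Bigr)^{n^2-1}.\]
Taking logarithms and dividing by $n-1>0$, this becomes
\[\log(q-1)\geq(n+1)\log\Bigl(1+\frac1{q-1}\Bigr).\]
Since $\log(1+x)\le x$, this follows from $\log(q-1)\ge\frac{n+1}{q-1}$, i.e. from $(q-1)\log(q-1)\geq n+1$, which is exactly the hypothesis.

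Combined with the strict inequality of the first step, this gives $(q-1)^{(n-1)(n+2)}>|\PGL_n(\field_q)|$. Hence no $n+2$ split maximal tori induce a packing, and $\packingNumber(\PGL_n(\field_q))\le n+1$.

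The parenthetical claim needs a short check. If $q\ge n+2$, then $(q-1)\log(q-1)\ge(n+1)\log(n+1)\ge n+1$ as soon as $\log(n+1)\ge1$, which holds for $n\ge2$ with the natural logarithm. The main obstacle is only keeping the inequalities sharp enough that the hypothesis suffices. The strictness in the order bound is what makes a non-strict hypothesis enough.
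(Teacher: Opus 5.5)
Your proof is correct and follows the paper's argument. Both compare $(q-1)^{(n-1)(n+2)}$ with the bound $|\PGL_n(\field_q)|<q^{n^2-1}$, and your use of $\log(1+x)\le x$ is equivalent to the paper's use of $\bigl(1+\frac{1}{q-1}\bigr)^{q-1}<e$; you also make explicit the reduction from $r\ge n+2$ tori to exactly $n+2$, which the paper leaves implicit.
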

\begin{remark}\label{rem:q>n/log(n)}
    In fact, the condition asymptotically gives $q\geq 1+\frac{n+1}{\log(n+1)-\log(\log(n+1))}\sim \frac{n}{\log(n)}$. 
    Indeed, denote the solution of $ye^y=x\geq 0$ by $y=W_0(x)$, the Lambert function. Now $(q-1)\log(q-1)\geq n+1$ is equivalent to $q-1\geq \frac{n+1}{W_1(n+1)}$. Using the known asymptotic value of $W_0$ gives the stated bound.
\end{remark}
\begin{proof}
    By contradiction, assume that $T_1,\dots,T_{n+2}$ induce a toric packing in $\PGL_n(\field_q)$. 
    It is well known that $|\PGL_n(\field_q)|=\frac{1}{q-1}\prod_{i=0}^{n-1}(q^n-q^i)=q^{n^2-1}\prod_{j=2}^n (1-q^{-j})< q^{n^2-1}$, 
    and that split maximal tori of $\PGL_n(\field_q)$ are of size $|T_i|=(q-1)^{n-1}$. 
    By assumption, 
    \[\left(\frac{q}{q-1}\right)^{n+1}=\left(1+\frac{1}{q-1}\right)^{n+1}
    \leq \left(1+\frac{1}{q-1}\right)^{(q-1)\log(q-1)}
    < e^{\log(q-1)}=q-1.\]
    Multiplying both sides by $(q-1)^{n+1}$ and then raising to the power $(n-1)$ gives 
    \[|\PGL_n(\field_q)|
    <q^{(n+1)(n-1)}
    <(q-1)^{(n-1)(n+2)}=\prod_{i=1}^{n+2}|T_i|=|T_1\dots T_{n+2}|\leq |\PGL_n(\field_q)|,\]
    where in the end, we used the injectivity of the multiplication map $T_1\times \dots \times T_{n+2}\to \PGL_n(\field_q)$. 
    The two sides of this chain of (in)equalities give a contradiction.
\end{proof}

\begin{example}\label{exmp:PGLmatrix}
Let $e_0,\dots,e_n\in V$ be any choice from \autoref{rem:frameCoordinates} corresponding to the projective frame $P$. 
Now $e_1,\dots,e_n$ is a vector space basis of $V$ which gives an isomorphism $\PGL(V)\isom \PGL_n(\field)$ with the $n\times n$ matrix group.
Under this isomorphism, 
the maximal tori of \autoref{prop:constructionPGL} take the form   
$T_k\isom \mat{T}_k/\{\lambda \mat{I}_n:\lambda \in\field^\times\}$
for $\mat{T}_k\leq\GL_n(\field)$ given by 
$\mat{T}_0\leteq \{\diag(x_1,\dots,x_n):x_i\in\field^\times\}$, and 
\begin{equation}\label{eq:hatTk}
\mat{T}_k\leteq\left\{
\begin{pmatrix}
x_1 & 0 & \cdots & 0 & x_k-x_1 & 0 & \cdots & 0\\
0 & x_2 & \cdots & 0 & x_k-x_2 & 0 & \cdots & 0\\
\vdots & \vdots & \ddots & \vdots & \vdots & \vdots & \ddots & \vdots\\
0 & 0 & \cdots & x_{k-1} & x_k-x_{k-1} & 0 & \cdots & 0\\
0 & 0 & \cdots & 0 & x_k & 0 & \cdots & 0\\
0 & 0 & \cdots & 0 & x_k-x_{k+1} & x_{k+1} & \cdots & 0\\
\vdots & \vdots & \ddots & \vdots & \vdots & \vdots & \ddots & \vdots\\
0 & 0 & \cdots & 0 & x_k-x_n & 0 & \cdots & x_n
\end{pmatrix}
:x_i\in\field^\times
\right\}\end{equation}
 for $1\leq k\leq n$. 
 We will see that the product $\mat{T}_k\dots \mat{T}_0$ consists of matrices with generic elements in the first $k$ columns and on the main diagonal, while $0$ in other entries, see \autoref{rem:stabiliserChain}.
\end{example}

\begin{remark}[Finite analogue of density]
    Consider \autoref{prop:constructionPGL} and assume that $\field=\field_q$ is the $q$-element finite field. 
    We have $|\PGL_n(\field_q)|=q^{n^2-1}-O(q^{n^2-3})$, 
    and every split maximal torus is of size $|T_i|=q^{n-1}-O(q^{n-2})$. 
    Note that the degree of these polynomials in $q$ is precisely the dimension of the corresponding group when the field $\field_q$ is extended to its algebraic closure $\closure{\field_q}$.
    As in the proof of \autoref{lem:TCN(PGL(n,q))<=n+1}, we have 
    $|T_n\dots T_0|=\prod_{i=0}^n |T_i|=(q-1)^{n^2-1}<|\PGL_n(\field_q)|<q^{n^2-1}$, cf. \autoref{lem:TCN>=n+2}. 
    Thus  
    \[\left(1-\frac{1}{q}\right)^{n^2-1} < \frac{|T_n\dots T_0|}{|\PGL_n(\field_q)|} < 1,\]
    so this ratio approaches $1$ as $q\to\infty$. 
    This suggests that when switching to the algebraic closure $\closure{\field_q}$, the product $T_n\dot T_0$ becomes dense in $\PGL_n(\closure{\field_q})$. 
    In fact, we will see in \autoref{lem:Tk...T0-Open} that over an arbitrary algebraically closed field $\field$, the corresponding product $T_n\dots T_0$ is actually open in $\PGL_n(\field)$.
\end{remark}

\subsection{Toric coverings in $\PGL(V)$}
\label{sec:PGLcovering}
\begin{summary}
    We show that the packing construction of \autoref{sec:PGLpacking} actually defined an open subset of $\PGL(V)$. While using twice as many maximal tori, it is easy to reach the remaining closed set, we can do much better. The idea is to move an arbitrary projective frame $Q$ into a fixed one $P$ from \autoref{sec:PGLpacking} using the action of some maximal tori.

    We prove that for a single carefully chosen maximal torus $T$, no orbit in $\Proj{V}$ can be fully contained in the closed set from above. 
    This means that even if we start from the bad closed set, we can `kick  $Q$ off' to the good open set by an appropriate element of $T$. 
    For the technical details, we us the Cauchy--Binet formula for determinants to show that none of the $\dim(V)^2$ polynomials defining the bad closed sets  
    are contant $0$ provided $T$ is chosen suitably. 
    Finally, we use the Combinatorial Nullstellensatz to show that provided the field is large enough, we can avoid all such bad sets simultaneously. 

    We prove \autoref{thm:mainPGL} alongside some concrete estimates in the case of finite fields.
\end{summary}

We describe $T_n\dots T_0$ from \autoref{prop:constructionPGL} more explicitly.

\begin{definition}\label{def:transformTo}
    For two projective frames $Q=(Q_0,\dots,Q_n)$ and $P=(P_0,\dots,P_n)$ of $\PGL(V)$, 
    write $Q\transformTo P$, if 
    $(Q_0,\dots,Q_k,P_{k+1},\dots,P_n)$ is a projective frame for every $0\leq k<n$.

    For a permutation $\sigma\in\Sym(\{0,\dots,n\})$, 
    define the projective frame $P_\sigma\leteq (P_{\sigma(0)},\dots,P_{\sigma(n)})$.
\end{definition}
\begin{remark}\label{rem:transformTo}
    Note that $\transformTo$ is a reflexive binary relation, but is neither symmetric nor transitive. 
    The relation is compatible with the action of $\PGL(V)$, i.e. $Q\transformTo P$ if and only if $g(Q)\transformTo g(P)$. 
    See \autoref{rem:tranformToIntuition} about the choice of the notation.

    $P\cdot \sigma\leteq P_\sigma$ gives a right action on $\Sym(\{0,\dots,n\})$. 
    Note that $P\transformTo P_{\sigma}$ if and only if $\sigma$ is the identity. 
 \end{remark}

\begin{remark}\label{rem:transformToOpen}
    Note that for a fixed $P$, $\{Q:Q\transformTo P\}$ is an open set in the space of projective frames (which is isomorphic to $\PSL(V)$). 
    Indeed,  fix a basis of $V$. 
    For the projective frame $P_0,\dots,P_{n}$, 
    we assign a matrix $\mat{P}\in\field^{n\times (n+1)}$ where the $j$th column of $\mat{P}$ is the coordinate vector of any nonzero representative vector of $P_j$ in $V$, i.e. a choice of the homogeneous coordinates of $P_j$ relative to the fixed basis.
    Similarly, 
    define $\mat{Q}\in \field^{n\times (n+1)}$ for $(Q_0,\dots,Q_{n})$. 
    Now $(Q_0,\dots,Q_k,P_{k+1},\dots,P_n)$ is a projective frame if and only if every $n\times n$ minor of the $n\times (n+1)$ matrix $\begin{pmatrix}
        \mat{Q}_{\{0,\dots,k\}} & \mat{P}_{\{k+1,\dots,n\}}
    \end{pmatrix}$
    is nonzero. (Note that this is independent of the scaling of the homogeneous coordinates.)
    Thus $Q\transformTo P$ if and only if 
    \begin{equation}\label{eq:transformToDet}
        \det\begin{pmatrix}
        \mat{Q}_{\{0,\dots,k\}\setminus\{j\}} & \mat{P}_{\{k+1,\dots,n\}\setminus\{j\}}
    \end{pmatrix}\neq 0
    \end{equation}
    for all $k\in\{0,\dots,n-1\}$, $j\in\{0,\dots,n\}$ 
    where $j$ is the index of the row we delete to obtain an $n\times n$ matrix. 
    The $k=n$ case of $\eqref{eq:transformToDet}$ is equivalent to $Q$ being a projective frame, while the $k=-1$ case is equivalent to $P$ being a projective frame. 
\end{remark}

\begin{definition}
    We say $T_1,\dots,T_r$ induce a \emph{dense toric packing} in $G$, 
    if they induce a toric packing and $\closure{T_1\dots T_r}=G$, i.e. the cosets of $T_1$ from \autoref{rem:packingCoveringIntuition} cover a dense subset of $G$.
\end{definition}
\begin{proposition}[Openness]\label{lem:Tk...T0-Open}
    In the setup of \autoref{prop:constructionPGL}, 
   define the closed subgroup $H_k\leteq \PStab(\{P_i:i>k\})$ of $\PSL(V)$  for every $0\leq k\leq n$. 
    Then we have 
    \[T_k\dots T_0 = \{g\in H_k: g^{-1}(P)\transformTo P\}
    = \{g\in H_k: P\transformTo g(P)\}.\]
    If $\field$ is algebraically closed, then $T_k\dots T_0$ is nonempty (Zariski) open in the closed subgroup $H_k$, i.e. 
    $T_k,\dots,T_0$ induce a dense packing of $H_k$. 
    In particular, $T_n,\dots, T_0$ induce a dense packing in $\PSL(V)$.
\end{proposition}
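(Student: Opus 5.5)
The plan is to prove the two descriptions of $T_k\dots T_0$ together by induction on $k$. The equality of the two right-hand sides is immediate from the compatibility in \autoref{rem:transformTo}: applying $g$ to both frames shows that $g^{-1}(P)\transformTo P$ if and only if $P\transformTo g(P)$. So I will work with the condition $P\transformTo g(P)$ throughout.

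First, two preliminary observations. For $i\le k$ the torus $T_i$ fixes every $P_j$ with $j>k$, so $T_k\dots T_0\subseteq H_k$. Next, for $g\in H_k$ we have $g(P_j)=P_j$ for all $j>k$. Hence in the condition $P\transformTo g(P)$, i.e. that $(P_0,\dots,P_m,g(P_{m+1}),\dots,g(P_n))$ is a frame for every $m<n$, the cases $m\ge k$ reduce to the frame $P$ itself. So only the indices $m<k$ matter.

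For $k=0$ we have $H_0=\PStab(\{P_1,\dots,P_n\})=T_0$, and the condition is vacuous, so the base case holds. For the inductive step I need one auxiliary fact. Since $T_k$ fixes the $n$ independent points $P_j$ ($j\ne k$), in the basis they determine $T_k$ consists of the diagonal matrices. Therefore the $T_k$-orbit of $P_k$ (which has all homogeneous coordinates nonzero) is exactly the set of points $R$ with all coordinates nonzero. Equivalently, it is the set of $R$ such that replacing $P_k$ by $R$ in $P$ still gives a projective frame. Moreover, the element $t\in T_k$ with $t(P_k)=R$ is unique, by \autoref{rem:regularActionFrame}.

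Now let $g\in H_k$. Then $g\in T_k(T_{k-1}\dots T_0)$ if and only if there is $t\in T_k$ with $h\leteq t^{-1}g\in T_{k-1}\dots T_0\subseteq H_{k-1}$. In particular $h$ must fix $P_k$, which forces $t(P_k)=g(P_k)$. So the decomposition exists if and only if two things hold. First, $g(P_k)$ lies in the $T_k$-orbit of $P_k$, i.e. $(P_0,\dots,P_{k-1},g(P_k),P_{k+1},\dots,P_n)$ is a frame. Second, for this $t$, by induction, $P\transformTo h(P)$. Using compatibility again, $P\transformTo h(P)$ is equivalent to $t(P)\transformTo g(P)$. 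Since $t(P_j)=P_j$ for $j\ne k$ and $t(P_k)=g(P_k)$, the defining frames of $t(P)\transformTo g(P)$ split into two groups. For $m<k$ they coincide with those of $P\transformTo g(P)$. For $m\ge k$ they are the frame $(P_0,\dots,g(P_k),\dots,P_n)$, which is also the $m=k-1$ instance of $P\transformTo g(P)$. Putting these together, $g\in T_k\dots T_0$ if and only if $P\transformTo g(P)$, which completes the induction. This bookkeeping of which frames coincide is the step I expect to need the most care.

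For the algebraically closed case, the condition $P\transformTo g(P)$ is the nonvanishing of finitely many minors \eqref{eq:transformToDet}, which are regular functions of $g$. So $T_k\dots T_0$ is open in $H_k$, and it is nonempty because it contains $1$ (as $P\transformTo P$). To get density I also need $H_k$ to be irreducible. In the basis of \autoref{exmp:PGLmatrix}, $H_k$ is the image of a group of block matrices with an arbitrary invertible-type upper block, diagonal lower part and free off-diagonal block. This is an open subset of an affine space, hence irreducible, so the nonempty open subset is dense. The packing property is \autoref{lem:packing} applied exactly as in \autoref{prop:constructionPGL}. Finally, $k=n$ gives $H_n=\PGL(V)$, which is the last claim.
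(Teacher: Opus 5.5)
Your proof is correct and is essentially the paper's argument: you use the regular action on frames to peel off the unique $t_k\in T_k$ (with $t_k(P_k)=g(P_k)$), then recurse into $H_{k-1}$. The differences are only organisational. You obtain both inclusions from a single biconditional induction, whereas the paper proves $\subseteq$ by a direct computation. You also explicitly justify the irreducibility of $H_k$ needed for density and derive the packing from \autoref{lem:packing}, both of which the paper leaves implicit.
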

\begin{remark}\label{rem:tranformToIntuition}
    This explains the intuition behind the $\transformTo$ notation. 
    $Q\transformTo P$ means precisely that the projective frame $Q$ can be transformed to $P$ via the product $T_n\dots T_0$, i.e. 
    that there are (unique) $t_i\in T_i$ for $0\leq i\leq n$ such that $t_n\dots t_0(Q)=P$.
\end{remark}

\begin{proof}
    The second equality is straightforward from \autoref{rem:transformTo}. 
    Let $P\leteq (P_0,\dots,P_n)$.
    We prove the first equality by showing both inclusions. 
    
    For the first containment, pick $g\in H_k$ with $g^{-1}(P)\transformTo P$. 
    We show by induction on $k$ that $g$ has a unique decomposition $g=t_k\dots t_0$ for some $t_i\in T_i$. 
    Indeed, for the $k=0$ base case, note that $H_0=T_0$. 
    So for any $g\in H_0=T_0$, 
    $Q\leteq g^{-1}(P)=(Q_0,P_1,\dots,P_n)$ is a projective frame by \autoref{rem:regularActionFrame}, hence $Q\transformTo P$ as claimed.
    Let $k>0$.     
    Now by the definition of $H_k$, the projective frame $Q\leteq g^{-1}(P)$ is of the form 
    $Q=(Q_0,\dots,Q_k,P_{k+1},\dots,P_n)$ for some $Q_i\in\Proj{V}$. 
    Now $Q\transformTo P$ implies that $Q'\leteq (Q_0,\dots,Q_{k-1},P_k,\dots,P_n)$ is a projective frame. 
    Then the regularity of the action (cf. \autoref{rem:regularActionFrame}) shows the existence of a (unique) $h\in\PGL(V)$ such that $h(Q')=P$ which actually satisfies $h\in H_{k-1}$ by construction, so by induction, we have (unique) $t_i\in T_i$ for $0\leq i\leq k-1$ with $h=t_{k-1}\dots t_0$.  
    Now $h(Q)=(P_0,\dots,P_{k-1},h(Q_k),P_{k+1},\dots,P_n)$, 
    so there is a (unique) $t_k\in T_k=\PStab(\{P_i:i\neq k\})$ such that $t_kh(Q)=P$. 
    On the other hand, $g(Q)=P$ by definition, so the regularity of the action on the frames implies $g=t_kh=t_kt_{k-1}\dots t_0\in T_k\dots T_0$ as required. 

    For the other containment, pick $g\leteq t_k\dots t_0\in T_k\dots T_0$. 
    Since $T_i=\PStab(\{P_j:j\neq i\})$, we have $g\in H_k$ by definition. 
    Now $Q\leteq g^{-1}(P)$ is of the form $(Q_0,\dots,Q_k,P_{k+1},\dots,P_n)$, 
    so $(t_i\dots t_0)^{-1}(P)=(Q_0,\dots,Q_i,P_{i+1},\dots,P_n)$ is a projective frame for every $0\leq i\leq k$, thus $Q\transformTo P$ as required.

    The openness part follows from \autoref{rem:transformToOpen}.
\end{proof}

\begin{example}\label{rem:stabiliserChain}
    The stabiliser subgroup $H_k=\PStab(\{P_i:i>k\})$ of \autoref{lem:Tk...T0-Open} fix $n-k$ points. 
    They form a flag of closed subgroups
    \[1\lneq H_0\lneq H_1\lneq \dots \lneq H_n=\PGL(V).\]
    Under the isomorphism of \autoref{exmp:PGLmatrix}, 
    $H_k\isom \mat{H}_k/\{\lambda \mat{I}_n:\lambda\in\field^{\times}\}$ where 
    \[\mat{H}_k = \left\{\begin{pmatrix}
        \mat{A} & \mat{0}_{k\times (n-k)} \\
        \mat{B} & \diag(x_{k+1},\dots,x_n)
    \end{pmatrix}\in \GL_n(\field):
    \mat{A}\in\GL_k(\field),\mat{B}\in\field^{k\times (n-k)},x_i\in\field^\times \right\}\]
    is the subgroup of $\GL_n(\field)$ with arbitrary element in the first $k$ columns and on the main diagonal, zero everywhere else (such that the matrix is invertible). 
    For the (lift of the) maximal torus $\mat{T}_i\leq \GL_n(\field)$ from \autoref{exmp:PGLmatrix}, 
    \autoref{lem:Tk...T0-Open} shows that $\mat{T}_k\dots \mat{T}_0$ is Zarsiki open in $\mat{H}_k$.
    
    If $\field$ is algebraically closed, then as an algebraic set, 
    $\dim (H_k) = \dim(\mat{H}_k)-1 = %(kn+n-k)-1=
    (k+1)(n-1)$.
    In particular, $\dim(H_{k})-\dim(H_{k-1})=n-1=\rank(\PGL_n(\field))=\dim(T_k)$, so in each step, a maximal torus fits perfectly. 
    In other words, 
    we have 
    \begin{equation}\label{eq:dim(Tk...T0)}
        \dim(T_k\dots T_0)=\dim(H_k)=(k+1)(n-1)=\dim(T_k)+\dots+\dim(T_0)
    \end{equation} 
    for the maximal tori $T_i$ of \autoref{prop:constructionPGL}.
\end{example}
\begin{remark}
    \autoref{thm:genericToriTransversal} shows that  \eqref{eq:dim(Tk...T0)} is the generic behaviour, i.e. that generic maximal tori are \emph{transversal}.
\end{remark}

\begin{lemma}\label{lem:nonzeroDetPolynomial}
    Let $a,b,c\in \N$ with $a=b+c$. 
    Let $\field$ be a field. 
    Let $\mat{A}\in\field^{a\times a}$, $\mat{B}\in \field^{a\times b}$, $\mat{C}\in\field^{a\times c}$ such that
    $\rank(\mat{B})=b$, 
    and     
    any (pairwise different) $b$ columns of $\mat{A}$ together with the columns of $\mat{C}$ form a vector space basis of $\field^a$. 
    For indeterminates $X\leteq (X_1,\dots,X_a)$, let $\mat{D}\leteq \diag(X)\in(\field[X])^{a\times a}$.
    Then the $a\times a$ determinant $f(X)\leteq \det \begin{pmatrix}\mat{A}\mat{D}\mat{B} & \mat{C}\end{pmatrix}\in \field[X]$ is a nonzero multilinear homogeneous  polynomial of degree $b$.
\end{lemma}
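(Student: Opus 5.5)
Here $\mat{A}\mat{D}\mat{B}$ is $a\times b$ and $\mat{C}$ is $a\times c$, so the block matrix is $a\times a$ and $f$ makes sense. The plan is to expand $f$ by Laplace expansion along the first $b$ columns and then apply Cauchy--Binet (\autoref{thm:Cauchy-Binet}) to the product $\mat{A}\mat{D}\mat{B}$. This writes $f$ explicitly as a combination of monomials $X_K\leteq \prod_{k\in K}X_k$ with $|K|=b$.

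First, Laplace expansion along the first $b$ columns gives
\[f=\sum_{I\subseteq[a],\,|I|=b}\pm\det\bigl((\mat{A}\mat{D}\mat{B})_{I,[b]}\bigr)\det\bigl(\mat{C}_{[a]\setminus I,[c]}\bigr),\]
with signs depending only on $I$. Next, apply \autoref{thm:Cauchy-Binet} twice to the $b\times b$ minor of $\mat{A}\mat{D}\mat{B}$. Since $\mat{D}$ is diagonal, $\det(\mat{D}_{K,L})$ vanishes unless $K=L$, in which case it equals $X_K$. Hence
\[\det\bigl((\mat{A}\mat{D}\mat{B})_{I,[b]}\bigr)=\sum_{|K|=b}\det(\mat{A}_{I,K})\,X_K\,\det(\mat{B}_{K,[b]}).\]
Every monomial in $f$ is therefore squarefree of degree exactly $b$, so $f$ is multilinear and homogeneous of degree $b$, provided it is nonzero. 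Regrouping by $K$ gives
\[f=\sum_{|K|=b} X_K\,\det(\mat{B}_{K,[b]})\,\sum_{|I|=b}\pm\det(\mat{A}_{I,K})\det(\mat{C}_{[a]\setminus I,[c]}).\]
The inner sum over $I$ is exactly the Laplace expansion, along the first $b$ columns, of $\det\begin{pmatrix}\mat{A}_{[a],K} & \mat{C}\end{pmatrix}$. So the coefficient of $X_K$ in $f$ is
\[\det(\mat{B}_{K,[b]})\cdot\det\begin{pmatrix}\mat{A}_{[a],K} & \mat{C}\end{pmatrix}.\]
Rather than tracking the signs by hand, I would justify this identity more cleanly by specialising $X$: setting $X_k=1$ for $k\in K$ and $X_k=0$ otherwise picks out exactly the monomials supported in $K$, and by multilinearity the only degree-$b$ such monomial is $X_K$ itself. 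With this specialisation $\mat{A}\mat{D}\mat{B}=\mat{A}_{[a],K}\mat{B}_{K,[b]}$, and then
\[\begin{pmatrix}\mat{A}_{[a],K}\mat{B}_{K,[b]} & \mat{C}\end{pmatrix}=\begin{pmatrix}\mat{A}_{[a],K} & \mat{C}\end{pmatrix}\begin{pmatrix}\mat{B}_{K,[b]} & 0\\ 0 & \mat{I}_c\end{pmatrix},\]
so multiplicativity of the determinant gives the coefficient formula directly.

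Finally, nonvanishing. Since $\rank(\mat{B})=b$, some $b\times b$ minor $\det(\mat{B}_{K,[b]})$ is nonzero. For that same $K$, the hypothesis says the $b$ distinct columns of $\mat{A}$ indexed by $K$, together with the columns of $\mat{C}$, form a basis of $\field^a$, so $\det\begin{pmatrix}\mat{A}_{[a],K} & \mat{C}\end{pmatrix}\neq0$. Thus the coefficient of $X_K$ is nonzero and $f\neq 0$. The only delicate step is the coefficient identification, which the specialisation argument above handles.
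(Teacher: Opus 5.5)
Your proof is correct and lands on exactly the paper's coefficient formula: $X_K$ has coefficient $\det(\mat{B}_{K,[b]})\det\begin{pmatrix}\mat{A}_{[a],K} & \mat{C}\end{pmatrix}$, and the nonvanishing argument via a nonzero maximal minor of $\mat{B}$ is the same. The only difference is bookkeeping: the paper applies \autoref{thm:Cauchy-Binet} once to the rectangular factorisation $\begin{pmatrix}\mat{A}\mat{D} & \mat{C}\end{pmatrix}\begin{pmatrix}\mat{B} & \mat{0}_{a\times c}\\ \mat{0}_{c\times b} & \mat{I}_c\end{pmatrix}$, which gives the monomial structure and the coefficients in one step, whereas you get the structure from Laplace expansion plus Cauchy--Binet and then use that same factorisation, now square, after specialising $X$ to the indicator vector of $K$.
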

\begin{proof}
    \autoref{thm:Cauchy-Binet} applied to the product 
    \[\begin{pmatrix}\mat{A}\mat{D} & \mat{C}\end{pmatrix}\cdot 
    \begin{pmatrix}\mat{B} & \mat{0}_{a\times c}\\\mat{0}_{c\times b} & \mat{I}_c\end{pmatrix}
    = \begin{pmatrix}\mat{A}\mat{D}\mat{B} & \mat{C}\end{pmatrix}
    \]
    of $a\times(a+c)$ and $(a+c)\times (b+c)$ block matrices with $I=[a]$ and $J=[b+c]=[a]$ expresses the $a\times a$ determinant in questions as 
    \begin{align*}
        f(X)= \det\begin{pmatrix}\mat{A}\mat{D}\mat{B}&\mat{C}\end{pmatrix} 
        &= 
    \sum_{K\subseteq[a+c],|K|=a} \det\begin{pmatrix}\mat{A}\mat{D}&\mat{C}\end{pmatrix}_{[a],K}\cdot 
    \det\begin{pmatrix}\mat{B} & \mat{0}_{a\times c}\\\mat{0}_{c\times b} & \mat{I}_c\end{pmatrix}_{K,[a]}
    \\&= \sum_{L\subseteq[a],|L|=b} \det\begin{pmatrix}(\mat{A}\mat{D})_{[a],L}&\mat{C}\end{pmatrix}\cdot 
    \det (\mat{B}_{L,[b]})
    \\&= \sum_{L\subseteq[a],|L|=b} \det\begin{pmatrix}\mat{A}_{[a],L}&\mat{C}\end{pmatrix}\cdot 
    \det (\mat{B}_{L,[b]})\cdot \prod_{l\in L}X_l.
    \end{align*}
    To see the second line of this computation, note that if $a+j\notin K$, then the $b+j$th column of $\mat{M}\leteq \begin{pmatrix}\mat{B} & \mat{0}_{a\times c}\\\mat{0}_{c\times b} & \mat{I}_c\end{pmatrix}_{K,[a]}$ contains only $0$ entries, hence $\det(\mat{M})=0$. 
    So every $K$ giving a potentially nonzero contribution to the sum satisfies $\{a+1,\dots,a+c\}\subseteq K$, i.e. the bottom $c$ rows of $\mat{M}$ must be selected. 
    Write $L\leteq K\setminus \{a+1,\dots,a+c\}\subseteq [a]$ for the selected rows of $\mat{M}$ amongst the top $a$ rows. Note that $|L|=|K|-c=a-c=b$, and that $\det(\mat{M})=\det(\mat{B}_{L,[b]})$. 
    The third line used the fact that $\mat{D}$ is a diagonal matrix.
    
    This shows that $f(X)$ is a multilinear and homogeneous of degree $b$. 
    To see that it is nonzero, note that $\rank(\mat{B})=b$ implies the existence of a nonzero $b\times b$ minor in $\mat{B}$, say $\det(\mat{B}_{L_0,[b]})\neq 0$. 
    By assumption, the $a\times a$ determinant $\det\begin{pmatrix}\mat{A}_{[a],L_0}&\mat{C}\end{pmatrix}\neq 0$,
    therefore the coefficient of $\prod_{l\in L_0}X_l$ in $f(X)$ is nonzero.
\end{proof}

Recall \autoref{def:transformTo}.
\begin{proposition}\label{prop:kicking}
    Let $V$ be an $n$-dimensional vector space over $\field$ with $|\field|>n^2$. 
    Let $P$ and $R=(R_0,\dots,R_n)$ be two projective frames in $\Proj{V}$ such that  
    $R_\sigma\transformTo P$  holds for every $\sigma\in\Sym(\Omega)$ where 
    $\Omega\leteq \{0,\dots,{n-1}\}$.
    Then for every projective frame $Q$ in $\Proj{V}$, there is $t\in\PStab(\{R_i:i\in \Omega\})$ such that $t(Q)\transformTo P$.  
\end{proposition}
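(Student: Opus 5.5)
We work in coordinates adapted to $R$. Lift $R_0,\dots,R_{n-1}$ to vectors; since they are projectively independent, they form a basis of $V$. Let $\mat{A}\in\field^{n\times n}$ be the matrix whose columns are these lifts, written in a fixed basis. By \autoref{rem:maxTorusPGL}, the torus $T\leteq\PStab(\{R_i:i\in\Omega\})$ consists of the classes of $\mat{A}\mat{D}\mat{A}^{-1}$ with $\mat{D}=\diag(x_1,\dots,x_n)$ and $x_i\in\field^\times$. Let $\mat{Q},\mat{P}\in\field^{n\times(n+1)}$ be coordinate matrices of the frames $Q,P$ as in \autoref{rem:transformToOpen}. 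By \eqref{eq:transformToDet}, the condition $t(Q)\transformTo P$ becomes the nonvanishing of the determinants
\[f_{k,j}(X)\leteq\det\begin{pmatrix}\mat{A}\mat{D}\mat{A}^{-1}\mat{Q}_{\{0,\dots,k\}\setminus\{j\}} & \mat{P}_{\{k+1,\dots,n\}\setminus\{j\}}\end{pmatrix}\]
for $0\leq k\leq n-1$ and $0\leq j\leq n$, with $\mat{D}=\diag(X)$. We also need $X_i\neq0$. So the aim is to find $x\in(\field^\times)^n$ at which $F\leteq\prod_{k,j}f_{k,j}$ does not vanish.

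The first step is to show that each $f_{k,j}$ is a nonzero multilinear homogeneous polynomial, using \autoref{lem:nonzeroDetPolynomial}. Set $a=n$, $\mat{B}=\mat{A}^{-1}\mat{Q}_{\{0,\dots,k\}\setminus\{j\}}$ and $\mat{C}=\mat{P}_{\{k+1,\dots,n\}\setminus\{j\}}$. Then $b$, the number of $Q$-columns, is $k+1$ or $k$, and $b+c=n$. Since $Q$ is a frame, any $b\le n$ of its columns are independent, so $\rank(\mat{B})=b$. The remaining hypothesis is that any $b$ columns of $\mat{A}$, that is any $b$ of the points $R_0,\dots,R_{n-1}$, together with the selected columns of $\mat{P}$ form a basis. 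I would check this by choosing $\sigma\in\Sym(\Omega)$ that lists the chosen $R$'s first, and reading off from $R_\sigma\transformTo P$ that the corresponding minor of $\begin{pmatrix}(\mat{R}_\sigma)_{\{0,\dots,k'\}} & \mat{P}_{\{k'+1,\dots,n\}}\end{pmatrix}$ is nonzero. The index bookkeeping is delicate here. The deleted column $j$ and the position of $R_n$, which is not in $\Omega$, have to be matched so that exactly the required $n\times n$ minor appears. This is where the hypothesis "for every $\sigma$" is used, and I expect it to be the main technical obstacle.

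The second step applies \autoref{thm:combinatorialNullstellensatz} to $F$, taking $S_i=\field^\times$, so that $|S_i|=|\field|-1\geq n^2$. Fix a monomial order and take the leading monomial of $F$. It equals the product of the leading monomials of the factors $f_{k,j}$, so its coefficient is nonzero and its degree is $\deg F$. Since each factor is multilinear, the exponent $t_i$ of $X_i$ in it is at most the number of factors that genuinely involve $X_i$. Several factors can be discarded. The factor with $k=0$, $j=0$ is a nonzero constant, because $P$ is a frame. Any factor with $b=n$ equals $\det(\mat{D})$ times a nonzero constant, so it is automatically nonzero on $(\field^\times)^n$. After removing these, one counts at most $n(n+1)-1-(\text{removed})\leq n^2-1$ relevant factors. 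This gives $t_i<n^2\leq|S_i|$.

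The conclusion is then that some $x\in(\field^\times)^n$ has $F(x)\neq0$, and the corresponding $t\in T$ satisfies $t(Q)\transformTo P$. If the count comes out slightly too large, I would also use the homogeneity of $F$: scaling $\mat{D}$ does not change $t$, so we may set $X_n=1$, which removes one variable and tightens the bound. The counting itself is routine. The substantive work is the verification of the hypothesis of \autoref{lem:nonzeroDetPolynomial} from $R_\sigma\transformTo P$.
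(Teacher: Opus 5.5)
Your route is the paper's: write $T=\PStab(\{R_i:i\in\Omega\})$ as $\mat{A}\diag(x)\mat{A}^{-1}$, show each minor $f_{k,j}$ is a nonzero multilinear homogeneous polynomial via \autoref{lem:nonzeroDetPolynomial}, then apply \autoref{thm:combinatorialNullstellensatz} to the product. The first step is easier than you expect. The relation $\transformTo$ only involves $k\le n-1$, so $R_n$ never enters. For a $b$-subset $J\subseteq\Omega$, take $\sigma\in\Sym(\Omega)$ with $\sigma(\{0,\dots,k\}\setminus\{j\})=J$ and read off the $(k,j)$ minor of $R_\sigma\transformTo P$. Your variant, listing the chosen $R$'s first and using $k'=b-1$, also works.

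The gap is in the counting, which you call routine. Of the $n(n+1)$ pairs $(k,j)$, exactly one gives a constant, namely $(0,0)$. Exactly one has $b=n$, namely $(n-1,n)$, since $b=n$ forces $k=n-1$ and $j=n$. Your expression $n(n+1)-1-(\text{removed})\le n^2-1$ would need $n$ further removals, but your listed discards supply only one. You are left with $n^2+n-2$ factors. For generic $Q$ each of them involves every $X_i$, since all the coefficients $\det(\mat{A}_L,\mat{C})\det(\mat{B}_{L,[b]})$ are then nonzero. So your argument gives only $t_i\le n^2+n-2$. With $S_i=\field^\times$ that needs $|\field|\ge n^2+n$, not $|\field|>n^2$. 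The fallback of setting $X_n=1$ does not help: the exponent of $X_i$ for $i<n$ is still bounded only by the number of factors containing $X_i$, which is unchanged.

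The missing observation is that the conditions are redundant. For $0\le k\le n-2$, the pairs $(k,k+1)$ and $(k+1,k+1)$ give literally the same minor: both use the $Q$-columns $\{0,\dots,k\}$ and the $P$-columns $\{k+2,\dots,n\}$. Dropping these $n-1$ duplicates, besides $(0,0)$ and $(n-1,n)$, leaves exactly $n^2-1$ factors. Then $t_i\le n^2-1<|\field|-1=|\field^\times|$, and your version goes through.

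The paper instead keeps $(n-1,n)$ and takes the $n^2$ factors indexed by $k\neq j$ with $S_i=\field$. It then deduces $x\in(\field^\times)^n$ from $f_{n-1,n}(x)=\det(\mat{Q}_\Omega)\prod_i x_i\neq 0$.
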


\begin{remark}\label{rem:RNC}
    The assumption means that $(R_{\sigma(0)},R_{\sigma(1)},\dots,R_{\sigma(k)},P_{k+1},\dots,P_n)$ is a projective frame for every $k\in\Omega$ and $\sigma\in\Sym(\Omega)$.
    This is satisfied if, for example, the union of the points of $P$ and $R$ are in \emph{general position}, i.e. every $n$ of these points are projectively independent. 
    For $|\field|\geq 2n+1$, we may take a rational normal curve through the points in $P$, and pick another $n+1$ points for $R_0,\dots,R_n$. 
    In fact, taking $R_n=P_n$ is suitable, so it is enough to have $|\field|\geq 2n$.
\end{remark}

\begin{proof}
    Fix a basis of $V$ in which we will represent every object from the statement. 
    Let $\mat{P}\in\field^{n\times (n+1)}$, $\mat{Q}\in\field^{n\times (n+1)}$ and  $\mat{R}\in\GL_n(\field)$ be the matrices corresponding to $P=(P_0,\dots,P_n)$, $Q=(Q_0,\dots,Q_n)$ and  $(R_0,\dots,R_{n-1})$, respectively, as in \autoref{rem:transformToOpen}.

    For every $k\in \Omega$, $j\in\{0,\dots,n\}$, define 
    index sets 
    $J_1\leteq \{0,\dots,k\}\setminus\{j\}$, 
    $J_2\leteq \{k+1,\dots,n\}\setminus\{j\}$, 
    natural numbers 
    $a\leteq n$, 
    $b\leteq |J_1|$, 
    $c\leteq |J_2|$, 
    and matrices 
    $\mat{A}\leteq \mat{R}$, 
    $\mat{B}\leteq \mat{R}^{-1} \mat{Q}_{J_1}$, 
    $\mat{C}\leteq \mat{P}_{J_2}$.
    To apply \autoref{lem:nonzeroDetPolynomial} with this setup, we claim that the conditions are satisfied.
    Indeed, note that $a=n=b+c$. 
    Since $\{Q_j:j\in J_1\}$ is projectively independent by assumption, the columns of 
    $\mat{Q}_{J_1}$ are linearly independent, thus $\rank(\mat{B})=\rank(\mat{Q}_{J_1})=|J_1|=b$ as $\mat{R}\in\GL_n(\field)$. 
    Let $J\subseteq \Omega$ be arbitrary with $|J|=b=|J_1|$. 
    Since $J_1\subseteq \Omega$, there is $\sigma\in\Sym(\Omega)$ with $J=\sigma(J_1)$. 
    By \eqref{eq:transformToDet} for $k=b$, the assumption $R_\sigma\transformTo P$ implies that 
    $\det\begin{pmatrix}
        \mat{A}_{J} & \mat{C}
    \end{pmatrix} = 
    \det\begin{pmatrix}
        \mat{R}_{\sigma(J_1)} & \mat{P}_{J_2}
    \end{pmatrix}\neq 0$,
    thus  any (pairwise different) $b$ columns of $\mat{A}$ together with the columns of $\mat{C}$ form a vector space basis of $\field^a\isom V$, as claimed. 
    Hence \autoref{lem:nonzeroDetPolynomial} is indeed applicable and shows that
    for indeterminates $X=(X_1,\dots,X_n)$ and 
    $\diag(X)\in (\field[X])^{n \times n}$, we have 
    \[f_{k,j}(X)\leteq  
    \det \begin{pmatrix}\mat{A}\diag(X)\mat{B} & \mat{C}\end{pmatrix}
    =\det 
    \begin{pmatrix}
        (\mat{R}\diag(X)\mat{R}^{-1}\mat{Q})_{J_1} & 
        \mat{P}_{J_2}
    \end{pmatrix} \in\field[X]\]
    is a nonzero multilinear homogeneous  polynomial of degree $b$.

    We examine special values of $(k,j)$. 
    First, note that $(k,j)=(0,0)$ if any only if $b=0$, in which case $f_{0,0}(X)=\det(\mat{P}_{\{1,\dots,n\}})\in\field^\times$.
    Second, $(k,j)=(n-1,n)$ if and only if $b=n$, in which case 
    $f_{n-1,n}(X)= \det(\mat{R}\diag(X)\mat{R}^{-1}\mat{Q}_{\Omega})=\det(\mat{Q}_{\Omega})\prod_{i=1}^n X_i$.
    Third, for every $k\in\{0,\dots,n-2\}$, we have $=f_{k+1,k+1}(X)=f_{k,k+1}(X)$ as $J_1=\{0,\dots,k\}$ and $J_2=\{k+2,\dots,n\}$ in both cases.

    For $I\leteq \{(k,j)\in\{0,\Omega\times \{0,\dots,n\}:k\neq j\}$, 
    define \[f(X)\leteq  \prod_{(k,j)\in I} f_{k,j}(X)\in\field[X].\]
    By above, $f(X)$ is a nonzero homogeneous polynomial (of degree $\frac{1}{2}n(n^2+1)$). 
    Being a product of multilinear polynomials, 
    every monomial $\prod_{i=1}^n X_i^{t_i}$ with nonzero coefficient in $f$ satisfies $t_i\leq |I|=n^2<|\field|$ by assumption.  
    Thus \autoref{thm:combinatorialNullstellensatz} is applicable to $f$ with $S_i\leteq \field$, and gives $x\leteq (x_1,\dots,x_n)\in\field^n$ such that $f(x)\neq 0$.
    In particular, $0\neq f_{n-1,n}(x)=\det(\mat{Q}_{\{0,\dots,n-1\}})\prod_{i=1}^n x_i$, thus in fact, $x\in(\field^\times)^n$.

    Define $\mat{T}\leteq \mat{R}\diag(x)\mat{R}^{-1}\in\GL_n(\field)$ and let $t\in\PGL(V)$ be whose matrix in the fixed basis lifts to $\mat{T}$. 
    We claim that $t$ satisfies the statement. 
    First, note that by definition of $\mat{T}$, columns of $\mat{R}$ are eigenvectors of $\mat{T}$, thus $t(R_i)=R_i$ for every $i\in \Omega$ by the definition of $\mat{R}$. Therefore, $t\in\PStab(\{R_i:i\in \Omega\})$. 
    Second, the projective frame $t(Q)$ is represented by $\mat{T}\mat{Q}\in\field^{n\times (n+1)}$. 
    Thus by \eqref{eq:transformToDet}, $t(Q)\transformTo P$ is equivalent to 
    \[f_{k,j}(x)=\det \begin{pmatrix}
        (\mat{R}\diag(x)\mat{R}^{-1}\mat{Q})_{\{0,\dots,k\}\setminus\{j\}} & 
        \mat{P}_{\{k+1,\dots,n\}\setminus\{j\}}
    \end{pmatrix} \neq 0\]
    for every $(k,j)\in \Omega\times j\in\{0,\dots,n\}$. 
    By above, it is enough to require this condition for $(k,j)\in I$. 
    By construction, $0\neq f(x)=\prod_{(k,j)\in I} f_{k,j}(x)$, so we are done.    
\end{proof}
\begin{remark}
    It would be elegant to have a coordinate-free proof of \autoref{prop:kicking}.
\end{remark}

\begin{corollary}\label{cor:TCN(PRL)>=n+2}
    Let $V$ be an $n$-dimensional vector space over a field $\field$ with $|\field|>n^2$. 
    Then 
    \[\coveringNumber(\PGL(V))\leq n+2.\]
\end{corollary}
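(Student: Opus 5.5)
We combine \autoref{lem:Tk...T0-Open} with \autoref{prop:kicking}. Fix a projective frame $P=(P_0,\dots,P_n)$ of $\Proj{V}$ and let $T_0,\dots,T_n$ be the maximal tori of \autoref{prop:constructionPGL}. We need a second frame $R=(R_0,\dots,R_n)$ with $R_\sigma\transformTo P$ for every $\sigma\in\Sym(\Omega)$. Then $T\leteq\PStab(\{R_i:i\in\Omega\})$ is a split maximal torus by \autoref{rem:maxTorusPGL}, since $R_0,\dots,R_{n-1}$ are projectively independent. The aim is to show $\PGL(V)=T_n\dots T_0\,T$, which uses $n+2$ split maximal tori.

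First I construct $R$ for $|\field|>n^2$. Following \autoref{rem:RNC}, it suffices that the $2n+2$ points of $P$ and $R$ (or $2n+1$ points, if we take $R_n=P_n$) are in general position. In coordinates where $P$ is the standard frame $e_1,\dots,e_n,e_0=\sum e_i$, I choose the points $R_i$ one at a time. Each new point must avoid finitely many hyperplanes spanned by $n-1$ of the previously chosen points. Alternatively, I take all points on a rational normal curve $(1,s,\dots,s^{n-1})$ with distinct parameters, after moving $P$ onto the curve by the regular action. Since $|\field|>n^2\geq 2n$, there are enough parameters, and Vandermonde determinants guarantee general position. The points $R_n$ can be arbitrary, provided $R$ is a frame. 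General position then gives every condition in \eqref{eq:transformToDet} for every ordering of $R_0,\dots,R_{n-1}$.

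Next comes the covering argument. Let $g\in\PGL(V)$ and set $Q\leteq g^{-1}(P)$, which is a projective frame. By \autoref{prop:kicking}, there is $t\in T$ with $t(Q)\transformTo P$. Let $h\leteq g t^{-1}$, so that $h^{-1}(P)=t g^{-1}(P)=t(Q)\transformTo P$. Since $H_n=\PGL(V)$, \autoref{lem:Tk...T0-Open} with $k=n$ gives $h\in T_n\dots T_0$. Hence $g=ht\in T_n\dots T_0T$. As $g$ was arbitrary, these $n+2$ split maximal tori induce a toric covering, and so $\coveringNumber(\PGL(V))\leq n+2$.

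The hard part, \autoref{prop:kicking}, is already done. The main remaining care is checking that \autoref{lem:Tk...T0-Open}'s first equality holds over an arbitrary field, which its proof shows since it only uses regularity of the action. The other point to check is that the existence of $R$ really needs only $|\field|\geq 2n$, which is implied by $|\field|>n^2$ for $n\geq2$. One also confirms that the order of multiplication matches \autoref{def:packingCOveringTiling}: the product $T_n\cdots T_0T$ is a product of $n+2$ maximal tori, which is all the definition requires.
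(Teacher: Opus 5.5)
Your proof is correct and follows the paper's argument: choose $P_0,\dots,P_n,R_0,\dots,R_{n-1}$ in general position via a rational normal curve, use \autoref{prop:kicking} to find $t\in T$ with $t(Q)\transformTo P$, and apply the first equality of \autoref{lem:Tk...T0-Open}, which holds over any field, to land in $T_n\dots T_0T$; starting from an arbitrary $g$ and writing $g=ht$ is just a more direct version of the paper's closing appeal to regularity of the action on frames. Note only that the rational normal curve construction (as in \autoref{rem:RNC}) is what actually guarantees the existence of $R$ over finite fields. Your greedy hyperplane-avoidance sketch would need a counting argument you do not give, and the naive union bound over those hyperplanes is not implied by $|\field|>n^2$.
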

\begin{proof}
    Let the points $P_0,\dots,P_n, R_0,\dots,R_{n-1}\in \Proj{V}$ be in general position. Such points exists as $|\field|>n^2$, see \autoref{rem:RNC}. 
    Define the maximal tori $T\leteq \PStab(\{R_0,\dots,R_{n-1}\})$ and 
    $T_i\leteq \PStab\{P_j:j\neq i\})$ for $i\in\{0,\dots,n\}$. 
    We prove the statement by showing that $T_n\dots T_1T=\PGL(V)$. 

    Let $Q$ be an arbitrary projective frame in $\Proj{V}$. 
    By \autoref{prop:kicking}, there exists $t\in T$ so that $t(Q)\transformTo P\leteq (P_0,\dots,P_n)$. 
    By \autoref{rem:regularActionFrame}, there is a (unique) $g\in \PGL(V)$ such that $gt(Q)=P$. 
    Now $g^{-1}(P)=t(Q)\transformTo P$, so \autoref{lem:Tk...T0-Open} applied with $k\leteq n$ (so that $H_k=\PStab(\emptyset)=\PGL(V)$) gives (unique) 
    $t_i\in T_i$ for $0\leq i\leq n$ such that $t_n\dots t_0=g$. 
    Therefore $t_n\dots t_1t(Q)=P$. 
    Hence there is an element of the product $T_n\dots T_0 T$ that maps an arbitrary projective frame $Q$ to the fixed projective frame $P$. 
    Hence the uniqueness part of \autoref{rem:regularActionFrame} shows that $T_n\dots T_0 T=\PGL(V)$. 
\end{proof}

Over finite fields, that lower bound $\coveringNumber(\PGL_n(\field))$ is one larger than as is \autoref{prop:TCNbounds} for algebraically closed fields.
\begin{lemma}\label{lem:TCN>=n+2}
    For every $n>1$ and every finite field $\field_q$, we have 
   $\coveringNumber(\PGL_n(\field_q))\geq n+2$. 
\end{lemma}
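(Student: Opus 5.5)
Suppose, for a contradiction, that $n+1$ split maximal tori $T_1,\dots,T_{n+1}$ satisfy $T_1\dots T_{n+1}=\PGL_n(\field_q)$. The plan is a counting argument that exploits the fact that two maximal tori always share the identity (indeed, any two tori share at least the identity), so the multiplication map cannot be a bijection.

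First we count. We have $|T_i|=(q-1)^{n-1}$ and
\[|\PGL_n(\field_q)|=q^{\binom n2}\prod_{j=2}^n(q^j-1).\]
The product of the torus sizes is $(q-1)^{(n+1)(n-1)}=(q-1)^{n^2-1}$. On the other hand,
\[|\PGL_n(\field_q)|=q^{\binom n2}(q-1)^{n-1}\prod_{j=2}^n(1+q+\dots+q^{j-1}),\]
and comparing factor by factor this is strictly larger than $(q-1)^{n^2-1}$ for every $q\ge 2$ and $n\ge2$. Indeed, we can pair the $\binom n2$ factors $q>q-1$ together with the geometric sums $1+\dots+q^{j-1}>(q-1)^{j-1}$; the exponents match because $\binom n2+\sum_{j=2}^n(j-1)+(n-1)=n^2-1$. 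This is also just the inequality $(q-1)^{n^2-1}<|\PGL_n(\field_q)|$ already used in the finite-density remark.

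Hence $|T_1\dots T_{n+1}|\le\prod|T_i|<|\PGL_n(\field_q)|$, which contradicts surjectivity. So no $n+1$ tori suffice. Fewer tori are even worse, since the image is smaller; alternatively, we may pad with copies of a torus, noting that $1$ lies in every torus, so a product of fewer tori is contained in a product of $n+1$ tori. This gives $\coveringNumber\ge n+2$.

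The only step needing care is the strict inequality. The cleanest route is the factorisation $q^n-q^i=q^i(q^{n-i}-1)$ with $q^{m}-1\ge(q-1)q^{m-1}\ge (q-1)^m$, where at least one inequality is strict for $n\ge2$ (e.g. $q^2-1>(q-1)^2$). No algebraic geometry is needed.
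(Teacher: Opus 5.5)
Your proof is correct and is essentially the paper's argument: both compare $\prod_i|T_i|=(q-1)^{(n-1)s}$ with $|\PGL_n(\field_q)|>(q-1)^{n^2-1}$. The paper obtains that strict inequality from $q^n-q^i\geq (q-1)q^{n-1}>(q-1)^n$, while you factor $|\PGL_n(\field_q)|$ as $q^{\binom n2}(q-1)^{n-1}\prod_{j=2}^n(1+\dots+q^{j-1})$ and compare piece by piece.

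Drop your opening remark that tori sharing the identity stops the multiplication map from being bijective. It is false: the tori of \autoref{prop:constructionPGL} share the identity, yet their multiplication map is injective. Your counting never uses that remark, so the proof is unaffected.
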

\begin{proof}
    This follows from a simple counting. 
    Assume $T_1,\dots,T_s$ induce a toric covering. 
    Then $T_1\dots T_s=\PGL_n(\field_q)$, so     
    \begin{align*}
    (q-1)^{(n-1)s}
    &=\prod_{i=1}^s |T_i|
    \geq |T_1\dots T_s|
    = |\PGL_n(\field_q)|
    =\frac{1}{q-1}\prod_{i=0}^{n-1}(q^n-q^i) 
    > (q-1)^{(n-1)(n+1)}
    \end{align*}
    as  $q^n-q^i\geq q^n-q^{n-1}=(q-1)q^{n-1}>(q-1)^n$ holds for every $0\leq i\leq n-1$ and $q\geq 2$.
    This shows that $s>n+1$ as required.
\end{proof}

We are ready to prove the main result of this paper.

\begin{proof}[Proof of \autoref{thm:mainPGL}]
    Let $V$ be an arbitrary $n$-dimensional vector space over $\field$. 
    By picking a basis, we have $\PGL(V)\isom \PGL_n(\field)$. 
    
    The first statement is given by \autoref{prop:constructionPGL} and \autoref{cor:TCN(PRL)>=n+2}.
    
    The next statement about algebraically closed fields follows from the first part and the bounds of \autoref{thm:mainBounds} 
    keeping mind that $\dim(\PGL_n(\field))=n^2-1$ and $\dim(\rank(\PGL_n(\field)))=n-1$.
    
    Similarly, the last statement about finite fields also follows from the first one and the bounds of \autoref{lem:TCN(PGL(n,q))<=n+1} and \autoref{cor:TCN(PRL)>=n+2} as $q>n^2$ implies $(q-1)\log(q-1)\geq n+1$.
\end{proof}

\subsection{Generic tori products}
\label{sec:PGLgeneric}
\begin{summary}
    Since having the product as large dimension as possible is an open condition. Thus, to show the genericness, it is enough to show that the set of suitable maximal tori is not empty. Since the construction of \autoref{sec:PGLpacking} achieves the upper bound, we are done.
\end{summary}

We start with the following standard observation.
\begin{lemma}[Genericness]\label{lem:generic}
    For algebraic varieties $\Gamma,X,Y$ (over an algebraically closed field), a morphism $f\from \Gamma\times X\to Y$, and an integer $d\in \N$, 
    define the set $D\leteq \{\gamma\in\Gamma:\dim(f(\gamma,X))\geq d\}$.
    Then $D$ is empty or 
    there there exists a nonempty open $U$ in $\Gamma$ with $U\subseteq D$.
\end{lemma}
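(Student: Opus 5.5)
The plan is to apply \autoref{thm:fibreDim} to the morphism over $\Gamma$, and to express $D$ as the complement of a closed set by semicontinuity of fibre dimension.

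Fix $\gamma\in\Gamma$. By \autoref{thm:fibreDim} applied to $f_\gamma\leteq f(\gamma,\cdot)\from X\to Y$, we have $\dim(f(\gamma,X))=\dim(X)-e(\gamma)$. Here $e(\gamma)$ is the minimal dimension of a fibre of $f_\gamma$, attained on a dense open subset of the image. If $X$ is reducible, I first reduce to irreducible components: $D$ is the union, over components $X_i$, of the corresponding sets $D_i$, and a finite union of open sets is open. So assume $X$ is irreducible.

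Next I consider the morphism $F\from\Gamma\times X\to\Gamma\times Y$, $(\gamma,x)\mapsto(\gamma,f(\gamma,x))$. For $(\gamma,x)$, the fibre of $F$ through $(\gamma,x)$ is $\{\gamma\}\times f_\gamma^{-1}(f_\gamma(x))$. By Chevalley's upper semicontinuity theorem for fibre dimension (the standard strengthening of \autoref{thm:fibreDim}, \cite[\S I.8]{mumford1999redbook}), the set
\[E_m\leteq\{(\gamma,x):\text{the fibre of $F$ through $(\gamma,x)$ has local dimension}\geq m\}\]
is closed in $\Gamma\times X$ for every $m$.

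Now $\gamma\notin D$ exactly when $\dim f(\gamma,X)\leq d-1$, i.e. when every point of $\{\gamma\}\times X$ lies in $E_{m}$ with $m\leteq\dim(X)-d+1$. Indeed, if the image has dimension $<d$, then by the first inequality of \autoref{thm:fibreDim} every fibre component has dimension $\geq \dim X-\dim f_\gamma(X)\geq m$. Conversely, if all fibres have dimension $\geq m$, then the generic equality in \autoref{thm:fibreDim} gives $\dim f_\gamma(X)\leq d-1$. Hence $\Gamma\setminus D=\{\gamma:\{\gamma\}\times X\subseteq E_m\}$.

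The main obstacle is showing that this set is closed. It equals the complement of $\pi(\Gamma\times X\setminus E_m)$, where $\pi$ is the projection to $\Gamma$. Projections $\Gamma\times X\to\Gamma$ are open maps for varieties over algebraically closed fields: they are flat, or one can argue directly from the product topology on basic opens. Since $\Gamma\times X\setminus E_m$ is open, its image under $\pi$ is open, and that image is exactly $D$. So $D$ is open in $\Gamma$, which yields the claim with $U\leteq D$ whenever $D\neq\emptyset$. If relying on openness of projection feels too heavy, the fallback is Chevalley: $D$ is constructible by \autoref{thm:Chevalley}, and is stable under generization by the semicontinuity above, hence contains a nonempty open set when nonempty.
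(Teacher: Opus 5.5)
Your proof is correct, but it takes a different route from the paper's.

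The paper works on the target of $F\from(\gamma,x)\mapsto(\gamma,f(\gamma,x))$. It writes $D=\pi_1(V)$, where $V$ is the set of points of $F(\Gamma\times X)$ whose $F$-fibre has dimension at most $\dim(X)-d$. It then takes the open set $U_F$ on which $F$ attains its generic fibre dimension $r$, and notes that $D\neq\emptyset$ forces $r\leq\dim(X)-d$, hence $U_F\subseteq V$. It concludes because $\pi_1(U_F)$ is a dense constructible subset of $\Gamma$ by Chevalley, so it contains a nonempty open set.

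You instead work on the source. Upper semicontinuity of local fibre dimension makes $E_m$ closed. Your two-sided fibre-dimension check identifies $\Gamma\setminus D$ with $\{\gamma:\{\gamma\}\times X\subseteq E_m\}$. Openness of the projection then shows that $D$ itself is open.

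\begin{itemize}
\item \textbf{What your route buys.} You prove more than the lemma asks: lower semicontinuity of $\gamma\mapsto\dim(f(\gamma,X))$, not just a nonempty open subset inside $D$. Your reduction to irreducible components of $X$ also handles reducible $X$. The paper's argument implicitly needs $\Gamma\times X$ irreducible, both for the generic fibre dimension of $F$ and for density of $\pi_1(U_F)$. That assumption is harmless in its application with $\Gamma=G^s$, $X=T^s$.
\item \textbf{What it costs.} Your input is stronger. Chevalley's upper semicontinuity theorem is not among the tools the paper quotes, whereas the paper's proof uses only the fibre dimension theorem and constructibility of images.
\end{itemize}

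One justification needs fixing. Openness of $\pi\from\Gamma\times X\to\Gamma$ does not follow from the product topology, because the Zariski topology on $\Gamma\times X$ is in general not the product topology. The elementary argument is $\pi(W)=\bigcup_{x\in X}\iota_x^{-1}(W)$ for open $W$. Here $\iota_x\from\gamma\mapsto(\gamma,x)$ is a morphism, so each $\iota_x^{-1}(W)$ is open. Alternatively, cite flatness of the projection as you suggest. With this in place, your fallback via generization is unnecessary.
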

\begin{proof}
    Assume that $D\neq\emptyset$. 
    Define the morphisms of the commutative diagram 
    \[\begin{tikzcd}
        \Gamma\times X \ar[dr,"f"] \ar[r,"F"]
        &  \Gamma\times Y \ar[d,->>,"\pi_2"] \ar[r,->>,"\pi_1"]
        & \Gamma
        \\
        X \ar[u,hook,"\iota_\gamma"] \ar[r, "f_\gamma"]
        & Y
    \end{tikzcd}\]
    by $F\from (\gamma,x)\mapsto (\gamma,f(\gamma,x))$, 
    $\pi_1\from (\gamma,y)\mapsto \gamma)$, 
    $\pi_2\from (\gamma,y)\mapsto y$, 
    and 
    $f_\gamma\from x\mapsto f(\gamma,x)$,  
    $\iota_\gamma\from x\mapsto (\gamma,x)$ for every $\gamma\in\Gamma$.
    Apply \autoref{thm:fibreDim} to $f_\gamma$, and note 
    $f_\gamma^{-1}(y)\isom F^{-1}(\gamma,y)$ (via $\iota_\gamma$) to get 
    \begin{align*}
        D&=
        \{\gamma\in \Gamma:\dim(f_\gamma(X))\geq d\}
        \\&= \{\gamma\in \Gamma:\dim(X)-\min_{y\in f_\gamma(X)}(\dim(f_\gamma^{-1}(y)))\geq d\}
        \\&= \{\gamma\in \Gamma:\exists y\in f_\gamma(X)\quad \dim(F^{-1}(\gamma,x))\leq \dim(X)-d\}
        \\&= \pi_1(V),
    \end{align*}
    where $V\leteq \{(\gamma,y)\in F(\Gamma\times X): \dim(F^{-1}(\gamma,y))\leq \dim(X)-d\}$.
    Let $U_F\subseteq \closure{F(\Gamma\times X)}$ be the nonempty open set with $U_F\subseteq F(\Gamma\times X)$ given by \autoref{thm:fibreDim} when applied to $F$.  
    Now $\dim(F^{-1}(u))=r\leteq \dim(\Gamma\times X)-\dim(F(\Gamma\times X))$ 
    for every $u\in U_F$, 
    whereas $\dim(F^{-1}(\gamma,y))\geq r$ for every $(\gamma,y)\in F(\Gamma\times X)$. 
    Since $D=\pi_1(V)$ is nonempty by assumption, we see that $V\neq\emptyset$. 
    This forces $\dim(X)-d\geq r$, 
    and consequently, $U_F\subseteq V$. 
    Since $F\circ \pi_1$ is surjective, 
    we have $\pi_1(\closure{F(\Gamma\times X)})=\Gamma$. 
    Thus by \autoref{thm:Chevalley}, $\pi_1(U_F)$ is constructuble and hence if $U$ is a locally closed component of $f(U_F)$ of maximal dimension, then $U$ is open in $\Gamma$. 
    Finally $U\subseteq \pi_1(U_F)\subseteq \pi_1(V)=D$ as required.
\end{proof}

We are ready to prove the final main statement of the paper.
\begin{proof}[Proof of \autoref{thm:genericToriTransversal}]
    Let $G\leteq \PGL(V)$, and fix an arbitrary maximal torus $T$ of $G$. 
    Now $\{T^g:g\in G\}$ is the complete set of maximal tori of $G$, since for every irreducible linear algebraic group, all maximal tori are conjugate. 
    For 
    $\Gamma\leteq G^s$, 
    $X\leteq T^s$, and  
    $Y\leteq G$, 
    define the morphism 
    \[f\from \Gamma\times X\to Y,\quad (g_1,\dots,g_s,t_1,\dots,t_s)\mapsto t_1^{g_1}\dots t_s^{g_s}\]
    of varieties, 
    and let $D\leteq \{\gamma\in \Gamma:\dim(f(\gamma,X))\geq d\}$, 
    for $d\leteq s\cdot (n-1)\in \N$.
    Note that for every $\gamma=(g_1,\dots,g_s)\in\Gamma$, we have $f(\gamma,X)=T^{g_1}\dots T^{g_s}$. 
    In particular, for every $(g_1,\dots,g_s)\in D$, we have  $\dim(T^{g_1}\dots T^{g_k})=s\cdot (n-1)=\sum_{i=1}^s \dim(T^{g_i})$.

    Let $T_k,\dots,T_0$ be the maximal tori from \autoref{prop:constructionPGL} for $k=s-1$. 
    Pick $g^*_i\in G$ with $T^{g^*_i}=T_{s-i}$ for $1\leq i\leq s$, 
    and let $\gamma^*\leteq (g^*_1,\dots,g^*_s)\in\Gamma$. 
    Now $\dim(f(\gamma^*,X))=\dim(T_k\dots T_0)=s\cdot (n-1)=d$ by \eqref{eq:dim(Tk...T0)}, hence $\gamma^*\in D$ by definition.
    Therefore \autoref{lem:generic} shows that there is $U_\Gamma\subseteq D$ for some nonempty open set $U_\Gamma$ in $\Gamma$.

    Let $H\leteq N_G(T)$ be the normaliser of $T$ in $G$. This is a closed subgroup, so the set of right cosets $\mathcal{T}\leteq \{Hg:g\in G\}$ is an algebraic variety and it gives a one-to-one parametrisation of maximal tori in $G$. 
    Let $\pi\from G\to \mathcal{T}$ denote the natural projection. 
    Then $\pi^s\from \Gamma\to \mathcal{T}^s$ is a surjective morphism (to the space of $s$-tuples of maximal tori of $G$), so 
    there is a nonempty open set $U$ in $\mathcal{T}^s$ such that 
    $U\subseteq\pi^s(U_\Gamma)$. 
    By above, for any $(\tilde{T}_1,\dots,\tilde{T}_s)\in U$ (i.e. for $s$ generic maximal tori of $G$),  
    we have $\dim(\tilde T_1\dots \tilde T_k)\geq \sum_{i=1}^k \dim(\tilde T_i)$. 
    The inequality also holds the other way by  \autoref{lem:dim(V1...Vs)} when applied to $V_i\leteq \tilde T_i$, and thus the statement follows.
\end{proof}

\section{Open problems and conjectures}
\label{sec:conjectures}

In \autoref{sec:PGL}, we analysed $\packingNumber(\PSL_n(\field))$ and $\coveringNumber(\PSL_n(\field))$. Natural candidates for further investigations are the following.
\begin{question}
    Keeping in mind \autoref{thm:mainPGL} and \autoref{rem:q>n/log(n)}, 
    is $\coveringNumber(\PGL_n(\field_q))=n+2$ also true in the regime $C \frac{n}{\log(n)}\leq q\leq n^2$? 
\end{question}

\begin{conjecture}
    For every $n>1$ and algebraically closed field $\field$, we have $\coveringNumber(\PGL_n(\field))=n+2$. 
\end{conjecture}

\begin{problem}
    Determine $\packingNumber(G)$ and $\coveringNumber(G)$ for classical group if Lie type over algebraically closed, or finite, or arbitrary fields.
\end{problem}

In an irreducible reductive linear algebraic group $G$, all maximal tori are conjugate and their union is dense in $G$. 
The following natural direction is also connected to the Liebeck--Nikolov--Shalev conjecture \cite{LiebeckNikolovShalev} proved by \citeauthor{initiatingproof}
\cite{lifshitz2024completingproof}, \cite{initiatingproof}.
\begin{problem}
    Let $H$ be a closed irreducible subgroup in an irreducible linear algebraic group $G$, such that $\bigcup_{g\in G}H^g$ is dense in $G$. 
    Determine the covering and packing numbers when the notion of split maximal torus in \autoref{def:packingCOveringTiling} is replaced by conjugates of $H$.
\end{problem}

The bounds of \autoref{thm:mainBounds} are possibly sharper for simple groups.
\begin{conjecture}
    Then for some absolute constants $C_1, C_2$ so that for 
    every simple algebraic group $G$ (over an algebraically closed field), we have 
    \[C_1\frac{\dim(G)}{\rank(G)} 
    \leq \packingNumber(G)
    \leq \frac{\dim(G)}{\rank(G)}
    \leq \coveringNumber(G)\leq C_2 \frac{\dim(G)}{\rank(G)}.\]
\end{conjecture}

\autoref{thm:genericToriTransversal} and \autoref{lem:torusGrows} naturally raise the following conjecture, which may be true even for reductive groups.
\begin{conjecture}
    If $G$ is a simple linear algebraic group (over an algebraically closed field), and $V$ is a closed subset of $G$, 
    then there exists a maximal torus $T$ of $G$ with 
    \[\dim(VT)=\max\{\dim(V)+\dim(T),\dim(G)\}.\]
\end{conjecture}

\printbibliography

\end{document}